\newtheorem{theorem}{Theorem}
\newtheorem{proposition}[theorem]{Proposition}
\newtheorem{corollary}[theorem]{Corollary}
\newtheorem{lemma}[theorem]{Lemma}
\DeclareMathOperator{\sub}{sub}
\DeclareMathOperator{\inv}{inv}
\title{Some properties of a Rudin--Shapiro-like sequence}
\author{Philip Lafrance, Narad Rampersad, Randy Yee\footnote{The
    second author was supported by an NSERC Discovery Grant.  The
    first and third authors were supported by NSERC USRAs.}\\
Department of Mathematics and Statistics, University of Winnipeg\\
515 Portage Ave., Winnipeg, Manitoba, R3B 2E9 Canada\\
\texttt{philip\_lafrance@hotmail.com}, \texttt{narad.rampersad@gmail.com},\\
\texttt{r7yee@uwaterloo.ca}}
\begin{document}
\maketitle
\begin{abstract}
  We introduce the sequence $(i_n)_{n \geq 0}$ defined by $i_n =
  (-1)^{\inv_2(n)}$, where $\inv_2(n)$ denotes the number of inversions
  (i.e., occurrences of $10$ as a scattered subsequence) in the binary
  representation of $n$.  We show that this sequence has many
  similarities to the classical Rudin--Shapiro sequence.  In
  particular, if $S(N)$ denotes the $N$-th partial sum of the sequence
  $(i_n)_{n \geq 0}$, we show that $S(N) = G(\log_4 N)\sqrt{N}$, where
  $G$ is a certain function that occillates periodically between
  $\sqrt{3}/3$ and $\sqrt{2}$.
\end{abstract}

\section{Introduction}
Loosely speaking, a \emph{digital sequence} is a sequence whose $n$-th
term is defined based on some property of the digits of $n$ when
written in some chosen base.  The prototypical digital sequence is the
\emph{sum-of-digits function} $s_k(n)$, which is equal to the sum of
the digits of the base-$k$ representation of $n$.  Of course, when
$k=2$, the sequence $s_2(n)$ counts the number of $1$'s in the binary
representation of $n$.  By considering only the parity of $s_2(n)$,
one obtains the classical \emph{Thue--Morse sequence} $(t_n)_{n \geq
  0}$, defined by $t_n = (-1)^{s_2(n)}$.  That is,
\[
\begin{array}{cccccccccc}
(t_n)_{n \geq 0} = & +1 & -1 & -1 & +1 & -1 & +1 & +1 & -1 & \cdots
\end{array}
\]

Similarly, if one denotes by $e_{2;11}(n)$ the number of occurrences
of $11$ in the binary representation of $n$, one obtains the
\emph{Rudin--Shapiro sequence} $(r_n)_{n \geq 0}$ by defining $r_n =
(-1)^{e_{2;11}(n)}$.  That is,
\[
\begin{array}{cccccccccc}
(r_n)_{n \geq 0} = & +1 & +1 & +1 & -1 & +1 & +1 & -1 & +1 & \cdots
\end{array}
\]

Traditionally, digital sequences have been defined in terms of the
number of occurrences of a given block in the digital
representation of $n$.  Here we define a sequence based on the number
of occurrences of certain patterns as \emph{scattered subsequences} in
the digital representation of $n$.

Let $a_0 a_1 \cdots a_\ell$ be the base-$k$ representation of an
integer $n$; that is
\[
n = \sum_{j=0}^\ell a_j k^{\ell-j}, \quad\quad a_j \in \{0,1,\cdots, k-1\}.
\]
A \emph{scattered subsequence} of $a_0 a_1 \cdots a_\ell$ is a word
$a_{j_1} a_{j_2} \cdots a_{j_t}$ for some collection of indices $0
\leq j_1 < j_2 < \cdots < j_t \leq \ell$.  Let $p$ be any word over
$\{0,1,\ldots,k-1\}$.  We denote the number of occurrences of $p$ as a
scattered subsequence of the base-$k$ representation of $n$ by
$\sub_{k;p}(n)$.  In particular, $\sub_{2;10}(n)$ denotes the number
of ocurrences of $10$ as a scattered subsequence of the binary
representation of $n$.  For example, since the binary representation
of the integer $12$ is $1100_2$ and the word $1100$ has four
occurrences of $10$ as a subsequence, we have $\sub_{2;10}(12) = 4$.

The quantity $\sub_{2;10}(n)$ can be viewed alternatively as the
number of \emph{inversions} in the binary representation of $n$.  In
general, over an alphabet $\{0,1,\ldots,k-1\}$, an \emph{inversion} in
a word $w$ is an occurrence of $ba$ as a scattered subsequence of $w$,
where $a,b \in \{0,1,\ldots,k-1\}$ and $b>a$.  For this reason, in the
remainder of this paper we will write $\inv_2(n)$ to denote
$\sub_{2;10}(n)$.

We now define the sequence $(i_n)_{n \geq 0}$ by $i_n =
(-1)^{\inv_2(n)}$.  That is,
\[
\begin{array}{cccccccccc}
(i_n)_{n \geq 0} = & +1 & +1 & -1 & +1 & +1 & -1 & +1 & +1 & \cdots
\end{array}
\]
We will show that this sequence has many similarities with the
Rudin--Shapiro sequence.

When studying digital sequences, one often looks at the
\emph{summatory function} of the sequence to get a better idea of the
long-term behaviour of the sequence.  For instance, Newman
\cite{New69} and Coquet \cite{Coq83} studied the summatory function of
the Thue--Morse sequence taken at multiples of $3$.  In particular,
\[
\sum_{0 \leq n < N} t_{3n} = N^{\log_4 3} G_0(\log_4 N) +
\frac13\eta(N),
\]
where $G_0$ is a bounded, continuous, nowhere
differentiable, periodic function with period $1$, and
\[
\eta(N)=
\begin{cases}
0 & \text{if $N$ is even,}\\
(-1)^{3N-3} & \text{if $N$ is odd.}\\
\end{cases}
\]

Similarly, Brillhart, Erd\H{o}s, and Morton \cite{BEM83}, and
subsequently, Dumont and Thomas \cite{DT89} studied the summatory
function of the Rudin--Shapiro sequence.  In this case,
\[
\sum_{0 \leq n < N} r_n =\sqrt{N} G_1(\log_4 N)
\]
where again $G_1$ is a bounded, continuous, nowhere
differentiable, periodic function with period $1$.  We will show that
the summatory function of the sequence $(i_n)_{n \geq 0}$ has the same
form as that of the Rudin--Shapiro sequence.

For more on digital sequences, the reader may consult
\cite[Chapter~3]{AS03}, as well as \cite{DG10}.  Brillhart and Morton
\cite{BM96} have also given a nice expository account of their work on
the Rudin--Shapiro sequence.

\section{Alternative definitions of the sequence $(i_n)_{n \geq
    0}$}\label{definitions}

Let us begin by recalling the definition of $(i_n)_{n \geq 0}$: we
have $i_n = (-1)^{\inv_2(n)}$, where $\inv_2(n)$ denotes the number of
ocurrences of $10$ as a scattered subsequence of the binary
representation of $n$.

Our first observation is that $(i_n)_{n \geq 0}$ is a
\emph{$2$-automatic sequence} (in the sense of Allouche and Shallit
\cite{AS03}).  It is generated by the automaton pictured in
Figure~\ref{DFAO}.  (We do not recapitulate the definitions of
\emph{automatic sequence} or \emph{automaton} here: the reader is
referred to \cite{AS03}.)

\begin{figure}[h!]
\centering
\begin{tikzpicture}[->,>=stealth',shorten >=1pt,auto,node distance=2.8cm,
                    semithick]

  \node[initial,state] (A)              {${+1 \choose +1}$};
  \node[state]         (B) [right of=A] {${-1 \choose +1}$};
  \node[state]         (C) [right of=B] {${-1 \choose -1}$};
  \node[state]         (D) [right of=C] {${+1 \choose -1}$};

  \path (A) edge [loop above] node {0} (A)
            edge [bend left]  node {1} (B)
        (B) edge [bend left]  node {0} (C)
            edge [bend left]  node {1} (A)
        (C) edge [bend left]  node {0} (B)
            edge [bend left]  node {1} (D)
        (D) edge [loop above] node {0} (D)
            edge [bend left]  node {1} (C);
\end{tikzpicture}
\caption{Automaton generating the sequence $(i_n)_{n \geq 0}$}\label{DFAO}
\end{figure}
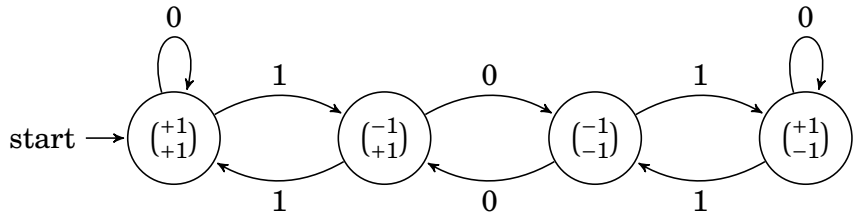

The automaton calculates $i_n$ as follows: the binary digits of $n$
are processed from most significant to least significant, and when the
last digit is read, the automaton halts in the state
\[
{ (-1)^{s_2(n)} \choose (-1)^{\inv_2(n)}}.
\]
In particular, $i_n$ is given by the lower component of the label of
the state reached after reading the binary representation of $n$ (the
first component has the value $t_n$).

Consequently, $(i_n)_{n \geq 0}$ can be generated by iterating the
morphism $g : \{A,B,C,D\}^* \to \{A,B,C,D\}^*$ defined by
\[
A \to  AB, \quad B \to CA, \quad C \to BD, \quad D \to DC,
\]
to obtain the infinite sequence
\[
ABCABDABCADCABCA \cdots
\]
and then applying the recoding
\[
A, B \to {+}1, \quad C, D \to {-}1.
\]
(The reader may again consult \cite[Chapter~6]{AS03} for the standard
conversion between automata and morphisms.)  Compare this to the
Rudin--Shapiro sequence, which is obtained by iterating
\[
A \to  AB, \quad B \to AC, \quad C \to DB, \quad D \to DC,
\]
and then applying the same recoding as above.

The sequence $(i_n)_{n \geq 0}$ also satisfies certain recurrence
relations.  To begin with, we have
\begin{eqnarray}
i_{2n} & = & i_n t_n \label{i_even} \\
i_{2n+1} & = & i_n, \label{i_odd}
\end{eqnarray}
where $t_n$ is the $n$-th term of the Thue--Morse sequence, as defined in
the introduction.  To see this, note that if $w$ is the binary
representation of $n$, then $w0$ is the binary representation of $2n$.
The number of occurences of $10$ as a subsequence of $w0$ equals the
number of occurrences of $10$ as a subsequence of $w$ plus the number
of $1$'s in $w$.  Thus
\[
i_{2n} = (-1)^{\inv_2(2n)} = (-1)^{\inv_2(n)+s_2(n)} =
(-1)^{\inv_2(n)}(-1)^{s_2(n)} = i_n t_n.
\]
Now the binary representation of $2n+1$ is $w1$, and appending the $1$
to $w$ creates no new occurrences of $10$, so $i_{2n+1} = i_n$.

\begin{proposition}\label{relations}
The sequence $(i_n)_{n \geq 0}$ satisfies the following recurrence
relations:
\begin{eqnarray*}
i_{4n} & = & i_n \\
i_{4n+1} & = & i_{2n} \\
i_{4n+2} & = & -i_{2n} \\
i_{4n+3} & = & i_n.
\end{eqnarray*}
\end{proposition}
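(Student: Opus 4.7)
The plan is to derive all four identities directly from the recurrences \eqref{i_even} and \eqref{i_odd} already established in the text, together with the standard recurrences for the Thue--Morse sequence, namely $t_{2n} = t_n$ and $t_{2n+1} = -t_n$, and the trivial observation that $t_n^2 = 1$ (since $t_n \in \{+1,-1\}$).

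The key step is to rewrite each index $4n+j$ as either $2m$ or $2m+1$ for an appropriate $m$, so that the two-term recurrences apply. Concretely, I would first write $4n = 2(2n)$ and apply \eqref{i_even} twice to get $i_{4n} = i_{2n}\, t_{2n} = i_n t_n \cdot t_n = i_n$. Next, $4n+1 = 2(2n)+1$, so \eqref{i_odd} gives $i_{4n+1} = i_{2n}$ immediately. For $4n+2 = 2(2n+1)$, I apply \eqref{i_even} and then \eqref{i_odd}, which gives $i_{4n+2} = i_{2n+1}\, t_{2n+1} = i_n \cdot (-t_n) = -i_n t_n = -i_{2n}$, where the final equality uses \eqref{i_even} in reverse. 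Finally, $4n+3 = 2(2n+1)+1$, so \eqref{i_odd} applied twice yields $i_{4n+3} = i_{2n+1} = i_n$.

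There is no real obstacle here: the proposition is essentially a bookkeeping exercise once the two-term recurrences for $(i_n)$ and $(t_n)$ are in hand. The only mild subtlety is the third case, where one must recognize that $-i_n t_n$ should be rewritten as $-i_{2n}$ rather than left in terms of $t_n$, so that the statement matches the form claimed. I would present the four computations as a short display, one line each, with the reference to \eqref{i_even}, \eqref{i_odd}, and the Thue--Morse recurrence indicated inline.
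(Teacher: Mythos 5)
Your proof is correct and takes essentially the same route as the paper: rewrite each index $4n+j$ as $2m$ or $2m+1$, then apply \eqref{i_even}, \eqref{i_odd}, and the Thue--Morse recurrences $t_{2n}=t_n$, $t_{2n+1}=-t_n$. If anything, your handling of $i_{4n+1}$ (a single application of \eqref{i_odd} with index $2n$, yielding $i_{2n}$ directly) is cleaner than the paper's displayed chain for that case, which contains a typographical slip.
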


\begin{proof}
First, recall that the Thue--Morse sequence satisfies the relations
\[
t_{2n} = t_n \quad\text{and}\quad t_{2n+1} = -t_n.
\]
Now we have
\[
i_{4n} = i_{2n} t_{2n} = i_{2n} t_n = i_n t_n t_n = i_n,
\]
where we have applied \eqref{i_even} twice.  Similarly, we get
\[
i_{4n+1} = i_{2(2n)+1} = i_{2n+1} = i_n
\]
by applying \eqref{i_odd} twice.  Next, we calculate
\[
i_{4n+2} = i_{2(2n+1)} = i_{2n+1} t_{2n+1} = i_n (-t_n) = -i_{2n},
\]
and finally,
\[
i_{4n+3} = i_{2(2n+1)+1} = i_{2n+1} = i_n.
\]
\end{proof}

The relations of Proposition~\ref{relations} can be represented in
matrix form as follows.  Define the matrices
\[
\Gamma_0 = \left( \begin{array}{cc}1&0\\0&1\end{array} \right) =
\Gamma_3, \quad
\Gamma_1 = \left( \begin{array}{cc}0&1\\-1&0\end{array} \right), \quad
\Gamma_2 = \left( \begin{array}{cc}0&-1\\1&0\end{array} \right).
\]
For $n=0,1,2,\ldots$ define
\[
V_n = {i_n \choose i_{2n}}.
\]
Then for $n=0,1,2,\ldots$ and $r = 0,1,2,3$, we have
\begin{equation}\label{gamma_rec}
V_{4n+r} = \Gamma_r V_n.
\end{equation}

\section{The summatory function}

Define the \emph{summatory function} $S(N)$ of $(i_n)_{n \geq 0}$ as
\[
S(N) = \sum_{0 \leq n \leq N} i_n.
\]
The first few values of $S(N)$ are:
\begin{center}
\begin{tabular}{|c|cccccccc|}
\hline
$N$ & 0 & 1 & 2 & 3 & 4 & 5 & 6 & 7 \\ \hline
$S(N)$ & 1 & 2 & 1 & 2 & 3 & 2 & 3 & 4 \\
\hline
\end{tabular}
\end{center}
The graph given in Figure~\ref{summatory} is a plot of the function
$S(N)$.  The upper and lower smooth curves are plots of the functions
$\sqrt{2}\sqrt{N}$ and $(\sqrt{3}/3)\sqrt{N}$.

\begin{figure}[h]
\centering
\includegraphics[scale=0.8]{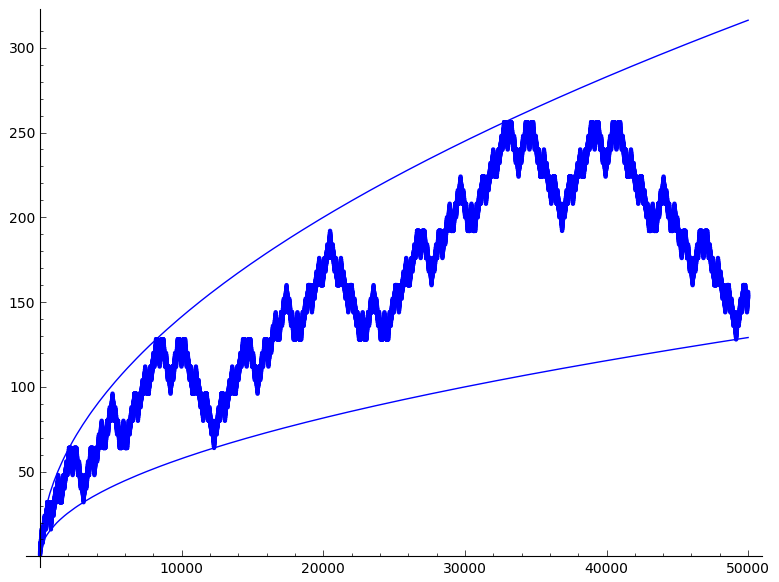}
\caption{A plot of the function $S(N)$}\label{summatory}
\end{figure}

\begin{theorem}\label{sum_growth}
There exists a bounded, continuous, nowhere differentiable,
periodic function $G$ with period $1$ such that
\[
S(N) = \sqrt{N}G(\log_4 N).
\]
\end{theorem}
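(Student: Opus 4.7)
The plan is to adapt the matrix-product approach used by Brillhart--Erd\H{o}s--Morton \cite{BEM83} and Dumont--Thomas \cite{DT89} for the Rudin--Shapiro sequence, pivoting on the recurrence \eqref{gamma_rec}. Set $T(N) = \sum_{n=0}^{N-1} V_n$, so that $S(N)$ is essentially the first coordinate of $T(N+1)$. A direct computation gives $\Gamma_0 + \Gamma_1 + \Gamma_2 + \Gamma_3 = 2I$, and \eqref{gamma_rec} upgrades to
$$T(4N+r) = 2\,T(N) + \Sigma_r\,V_N, \qquad \Sigma_r = \sum_{j=0}^{r-1} \Gamma_j,$$
for $0 \le r \le 3$. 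Iterating along the base-$4$ expansion $N = \sum_{j=0}^{k} d_j\,4^j$ yields the closed form $T(N) = \sum_{j=0}^{k} 2^j\,\Sigma_{d_j}\,V_{\lfloor N/4^{j+1} \rfloor}$. Since each $\Sigma_r V_n$ is uniformly bounded, this immediately gives $T(N) = O(2^k) = O(\sqrt{N})$, and at $N = 4^{k}$ one gets the clean identity $T(4^k) = 2^k V_0$.

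Next I would construct $G$ by passing to a continuous limit. Let $\widetilde{T} : [0,\infty) \to \mathbb{R}^2$ be the piecewise-linear extension of $T$ and define the rescaled functions $F_k(x) = \widetilde{T}(4^k x)/2^k$ on $[1,4]$. The recurrence implies $\|F_{k+1} - F_k\|_\infty = O(2^{-k})$, so the $F_k$ converge uniformly to a continuous limit $\Phi : [1,4] \to \mathbb{R}^2$. Extracting the first coordinate and pulling back through $y \mapsto 4^{\{y\}}/2^{\{y\}}$ gives a bounded, continuous, $1$-periodic function $G$, and the identity $S(N) = \sqrt{N}\,G(\log_4 N)$ is obtained by evaluating the iterated formula at $x = N/4^k \in [1,4]$. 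Periodicity $G(y+1) = G(y)$ corresponds exactly to the scaling $T(4N) = 2\,T(N)$, and the envelope constants $\sqrt{3}/3$ and $\sqrt{2}$ are found by locating the extrema of $\Phi$ on $[1,4]$, which land at specific endpoints of the self-similar subdivision $[r,r+1]$ ($r=1,2,3$).

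The main obstacle is proving that $G$ is \emph{nowhere differentiable}. The strategy is to exploit the self-similar functional equation satisfied by $\Phi$: on each sub-interval $[r,r+1] \subset [1,4]$ the graph of $\Phi$ is an affine contracted copy of the full graph, with contraction factor $2$ matching the argument-scaling factor $4$, and the matrices $\Gamma_1$ and $\Gamma_2$ act as $\pm 90^\circ$ rotations in $\mathbb{R}^2$. If $G'(y_0)$ existed, then zooming around $y_0$ by factors of $4^k$ would force the renormalized increments to converge to a linear function; but the rotational action of $\Gamma_1, \Gamma_2$ injects an oscillatory component at every scale. Concretely, one selects two sequences of points approaching $y_0$ along which the difference quotients can be read off from the iterated matrix expansion and shown to differ by a quantity bounded away from zero, contradicting differentiability. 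Carrying out this dyadic-selection argument in detail (rather than invoking it by analogy) is expected to be the bulk of the work, and it should parallel the corresponding computation for the Rudin--Shapiro sequence in \cite{BEM83,DT89}.
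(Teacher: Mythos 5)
Your construction of $G$ is essentially correct and takes a genuinely different route from the paper for the main part of the statement: the paper simply invokes Theorem~\ref{thm3_5_1} (Allouche--Shallit \cite{AS03}, Theorem~3.5.1) with $k=4$, $d=2$, $c=2$, whereas you reprove that machinery in this special case via the telescoped expansion $T(N)=\sum_j 2^j\Sigma_{d_j}V_{\lfloor N/4^{j+1}\rfloor}$ and uniform convergence of the rescalings $F_k$. That argument works: since $\Sigma_0=0$ one has $T(4N)=2T(N)$ exactly, so the $F_k$ agree at the grid points $N/4^k$, $\|F_{k+1}-F_k\|_\infty=O(2^{-k})$, and the limit satisfies $\Phi(4)=2\Phi(1)$, yielding a bounded continuous period-$1$ function. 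What your self-contained route buys is transparency; what it costs is a bookkeeping subtlety you gloss over: your exact identity is for $T(N)=\sum_{0\le n<N}i_n$, i.e.\ $S(N-1)=\sqrt{N}\,G(\log_4 N)$, and it does not convert into $S(N)=\sqrt{N}\,G(\log_4 N)$ with $S(N)=\sum_{0\le n\le N}i_n$, because $A(4N)=2A(N)-V_N\ne 2A(N)$ (indeed $S(1)=2$ but $S(4)=3$, so a period-$1$ $G$ cannot satisfy the displayed equality at both points). The $n<N$ normalization, which is the one used in \cite{BEM83}, is the one for which the exact identity holds. Also, your parenthetical about reading off the envelope constants $\sqrt{3}/3$ and $\sqrt{2}$ from the extrema of $\Phi$ is not part of this theorem; it is the content of the considerably harder Theorems~\ref{I_k_max}, \ref{I_k_min}, \ref{sqrt2} and \ref{lower_bound}.

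The genuine gap is nowhere differentiability. The paper does not prove it either --- it cites Tenenbaum \cite{Ten97} and asserts the argument carries over verbatim --- but your sketch would not close the gap as written. First, the self-similarity you invoke is misidentified: the graph of $\Phi$ on $[r,r+1]$ is not an affine contracted copy of the full graph on $[1,4]$. The correct structure comes from $V_{m4^s+t}=M_tV_m$ where $V_t=M_tV_0$, so the increment of $T$ over $[m4^s,(m+1)4^s]$ is $L(u)V_m$ with $L(u)=\sum_{t<u}M_t$; since the four possible vectors $V_m$ do not commute with $L(u)$, one gets several distinct local shapes, each a rescaled copy of $T$ on $[0,4^s]$ twisted by $V_m$ --- this is what Lemma~\ref{dividingI} encodes. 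Second, ``the rotational action of $\Gamma_1,\Gamma_2$ injects an oscillatory component at every scale'' is a heuristic, not a proof: to contradict differentiability at $y_0$ you must produce, for infinitely many $k$, points within $O(4^{-k})$ of $y_0$ whose difference quotients differ by an amount bounded below independently of $k$, and you must rule out cancellation along the particular digit sequence of $y_0$. That uniform lower bound on the oscillation of the increments is exactly the hard step in \cite{BEM83,Ten97}, and you explicitly defer it, so the nowhere-differentiability assertion remains unproved in your write-up.
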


A plot of the function $G$ is given in Figure~\ref{periodicG}.

\begin{figure}[h]
\centering
\includegraphics[scale=0.8]{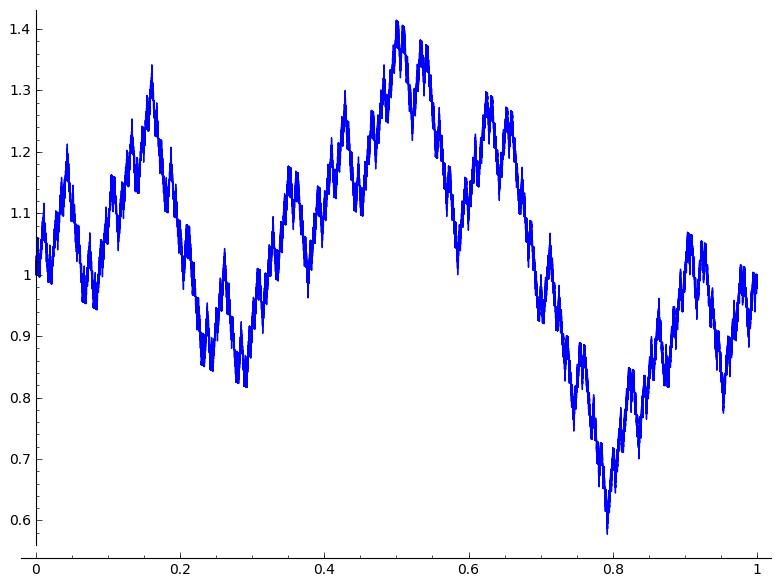}
\caption{A plot of the periodic function $G$}\label{periodicG}
\end{figure}

 The proof of Theorem~\ref{sum_growth} is a straightforward application
of the following result \cite[Theorem~3.5.1]{AS03} (stated here in
slightly less generality):

\begin{theorem}\label{thm3_5_1}
Let $k \geq 2$ be an integer.  Suppose there exist an integer $d \geq
1$, a sequence of vectors $(V_n)_{n \geq 0}$, $V_n \in \mathbb{C}^d$,
and $k$ $d \times d$ matrices $\Gamma_0, \ldots, \Gamma_k$ such that
\begin{enumerate}
\item $V_{kn+r} = \Gamma_rV_n$ for $n=0,1,2,\ldots$ and $r =
  0,1,\ldots,k-1$;
\item $\|V_n\| = O(\log n)$;
\item $\Gamma := \Gamma_0 + \cdots + \Gamma_k = cI$, where $I$ is the
  $d \times d$ identity matrix and $c>0$ is some constant.
\end{enumerate}
Then there exists a continuous function $F : \mathbb{R} \to
\mathbb{C}^d$ of period $1$ such that if $A(N) = \sum_{0 \leq n \leq N}
V_n$, then
\[
A(N) = N^{\log_k c} F(\log_k N).
\]
\end{theorem}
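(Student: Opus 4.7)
The plan is to derive a self-similar recurrence for $A(N)$ from hypotheses~(1) and~(3), iterate it to express $A(N)/N^{\log_k c}$ as a truncated series depending only on the base-$k$ digits of $N$, and then identify the uniform limit of these truncations as a continuous periodic function $F$.

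First, I would establish the fundamental recurrence. Writing $N = kq + s$ with $q = \lfloor N/k \rfloor$ and $0 \le s < k$, grouping the terms of $A(N) = \sum_{n=0}^{N} V_n$ by residue class modulo $k$, and applying hypothesis~(1), gives
\[
A(N) = \sum_{i=0}^{q-1} \Gamma V_i + \sum_{r=0}^{s} \Gamma_r V_q,
\]
where $\Gamma := \Gamma_0 + \cdots + \Gamma_{k-1}$. Setting $B_s := \Gamma_0 + \cdots + \Gamma_s$, invoking $\Gamma = cI$ from hypothesis~(3), and using $A(q-1) = A(q) - V_q$, this rearranges to
\[
A(N) = c\,A(\lfloor N/k \rfloor) + (B_{N \bmod k} - cI)\,V_{\lfloor N/k \rfloor}.
\]
Note that $B_{k-1} = \Gamma = cI$, so the inhomogeneous term vanishes whenever $N \equiv k-1 \pmod k$; this identity is precisely what will enforce period-$1$ symmetry of $F$.

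Next, I would iterate the recurrence. For $N \in [k^m, k^{m+1})$ with base-$k$ digits $a_m a_{m-1} \cdots a_0$, unfolding $m$ times gives the closed-form expression
\[
A(N) = c^m A(a_m) + \sum_{j=0}^{m-1} c^j (B_{a_j} - cI)\,V_{\lfloor N/k^{j+1} \rfloor}.
\]
Dividing both sides by $N^{\log_k c} = c^m y^{\log_k c}$ where $y := N/k^m \in [1, k)$, and re-indexing by $\ell = m - j$, expresses $A(N)/N^{\log_k c}$ as a partial sum whose $\ell$-th term is bounded by $c^{-\ell}\cdot O(\ell)$ (using hypothesis~(2) together with $\lfloor N/k^{m-\ell+1}\rfloor < k^{\ell}$). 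Since $c>1$ in the cases of interest, this series converges absolutely and geometrically, so as $m \to \infty$ with the leading digits of $N$ tracking the base-$k$ expansion of $y$, the partial sums tend to a well-defined function of $y \in [1, k)$. I would take this limit, suitably normalized, to define $F(\log_k y)$ for $y \in [1, k)$ and extend $F$ to $\mathbb{R}$ by requiring $F(x+1) = F(x)$; by construction, $A(N) = N^{\log_k c} F(\log_k N)$ on the positive integers.

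The hard part is establishing continuity of $F$. Two technical issues must be resolved: the base-$k$ digits of $y$ depend discontinuously on $y$ at $k$-adic rationals, so the two legal digit expansions of such a rational must yield the same value of the series; and the periodic extension demands $\lim_{y \to k^{-}} F(\log_k y) = F(0)$. Both reduce to the vanishing identity $B_{k-1} - cI = 0$ from the first step, which produces a telescoping cancellation between a trailing block $(k-1)(k-1)\cdots$ in one expansion and the carried leading digit in the other. Once this matching is verified, the geometric tail bound from step~two provides uniform convergence of the partial sums in $y$, yielding continuity of $F$ on $\mathbb{R}$ and completing the proof.
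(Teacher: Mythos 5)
A preliminary remark: the paper does not prove this statement at all --- it is quoted (in the authors' words, ``in slightly less generality'') from Allouche and Shallit, so there is no in-paper proof to compare yours against. Your outline follows the standard route to results of this type: the derivation of $A(N)=c\,A(\lfloor N/k\rfloor)+(B_{N\bmod k}-cI)V_{\lfloor N/k\rfloor}$ is correct (after silently fixing the statement's typo $\Gamma_k$ for $\Gamma_{k-1}$), as is its iterate over the base-$k$ digits, and you correctly identify $B_{k-1}=cI$ as the mechanism behind periodicity and continuity at $k$-adic rationals.

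The genuine gap is the sentence ``by construction, $A(N)=N^{\log_k c}F(\log_k N)$ on the positive integers.'' Your $F$ is defined as the \emph{limit} of the truncated series, whereas $A(N)/N^{\log_k c}$ is a \emph{partial} sum of it: if $N$ has $m+1$ digits and $y=N/k^m$, the series for $F(\log_k y)$ continues past $\ell=m$ with terms $c^{-\ell}(B_0-cI)V_{Nk^{\ell-m-1}}$ coming from the trailing zeros of $y$, and these do not vanish in general. Equivalently, exact equality with a period-$1$ function would force $A(Nk^p)/c^p$ to be independent of $p$, but your own recurrence gives $A(Nk^p)=c^pA(N)+(\Gamma_0^p-c^pI)V_N$, which differs from $c^pA(N)$ unless $(\Gamma_0^p-c^pI)V_N=0$. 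The identity as stated is in fact untenable: in the paper's own application it would require $S(1)/\sqrt{1}=S(4)/\sqrt{4}$, yet $S(1)=2$ and $S(4)=3$. What your construction actually yields --- and what any correct version of the theorem asserts --- is $A(N)=N^{\log_k c}F(\log_k N)+E(N)$ with an explicit error term (here of order $\log N$, namely the tail you discarded), so you must carry that error term through rather than claim exact equality. Two smaller caveats: your convergence argument genuinely needs $c>1$ (the stated hypothesis $c>0$ does not suffice, though $c=2$ in the application), and the telescoping verification at $k$-adic rationals, which you rightly call the hard part, is asserted rather than carried out; it requires combining $B_{k-1}=cI$ with $B_s-B_{s-1}=\Gamma_s$ and $V_{kn+r}=\Gamma_rV_n$, because the two expansions of a $k$-adic rational produce prefix integers differing by $1$, so the $V$-arguments in the two series do not literally coincide.
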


Theorem~\ref{sum_growth} (except for the non-differentiability of $G$)
now follows from Theorem~\ref{thm3_5_1} by taking $k=4$, $d=2$, and
letting the $\Gamma_r$ and $V_n$ be as defined in Section~\ref{definitions}.
Condition~(1) is Eq.~\eqref{gamma_rec}; Condition~(2) is clear, since
$i_n \in \{-1,+1\}$; Condition~(3) holds with $c=2$.  Now $S(N)$ is
the first component of the vector $A(N)$; if we take $G$ to be the
function obtained by projecting $F$ onto its first component,
Theorem~\ref{thm3_5_1} gives
\[
S(N) = N^{\log_4 2} G(\log_4 N) = N^{1/2} G(\log_4 N),
\]
as required.  All the assertions of Theorem~\ref{sum_growth} have now
been established, except for the nowhere differentiability of $G$.  To
obtain this, we note that the proof of \cite{Ten97} for the
summatory function of the Rudin--Shapiro sequence goes through here
for $S(N)$ without modification.

\begin{proposition}\label{sumIdentities}
The function $S(n)$ satisfies the following recurrence relations:
\begin{eqnarray}
S(4n) &=& 2S(n) - i_n\\
S(4n+1) &=& 2S(n) - i_n + i_{2n}\\
S(4n+2) &=& 2S(n) - i_n\\
S(4n+3) &=& 2S(n).
\end{eqnarray}
\end{proposition}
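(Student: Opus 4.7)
The plan is to prove the last identity, $S(4n+3) = 2S(n)$, directly by grouping the sum into blocks of four, and then deduce the other three by peeling off the last term and invoking Proposition~\ref{relations}.

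First I write
\[
S(4n+3) = \sum_{k=0}^{4n+3} i_k = \sum_{m=0}^{n} \bigl( i_{4m} + i_{4m+1} + i_{4m+2} + i_{4m+3} \bigr).
\]
By Proposition~\ref{relations}, the bracketed quantity equals $i_m + i_{2m} + (-i_{2m}) + i_m = 2 i_m$, so the cross terms cancel and I obtain $S(4n+3) = 2 \sum_{m=0}^{n} i_m = 2 S(n)$, which is the fourth identity.

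Next, I work downwards through the residues mod $4$ by subtracting a single term and again applying Proposition~\ref{relations}. Namely,
\[
S(4n+2) = S(4n+3) - i_{4n+3} = 2S(n) - i_n,
\]
then
\[
S(4n+1) = S(4n+2) - i_{4n+2} = 2S(n) - i_n - (-i_{2n}) = 2S(n) - i_n + i_{2n},
\]
and finally
\[
S(4n) = S(4n+1) - i_{4n+1} = 2S(n) - i_n + i_{2n} - i_{2n} = 2S(n) - i_n.
\]
This yields all four recurrences.

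There is no real obstacle: the argument is purely computational, with the only mild subtlety being that one must choose the right starting point (the residue $4n+3$, where the telescoping is cleanest) rather than trying to handle $S(4n)$ first. The rest is routine bookkeeping using the four identities of Proposition~\ref{relations}.
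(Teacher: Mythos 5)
Your proof is correct and follows essentially the same route as the paper: the paper establishes $S(4n+3)=2S(n)$ by summing the vector recurrence $V_{4j+r}=\Gamma_r V_j$ over blocks of four and using $\sum_r \Gamma_r = 2I$, which is exactly your scalar computation $i_{4m}+i_{4m+1}+i_{4m+2}+i_{4m+3}=2i_m$ in matrix clothing, and it then obtains the other three identities by subtracting the trailing terms via Proposition~\ref{relations}, just as you do.
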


\begin{proof}
Let $A(n) = \sum_{0 \leq j \leq n} V_j$ (as in Theorem~\ref{thm3_5_1}).
Then
\begin{align*}
A(4n+3) &= \sum_{0 \leq j \leq 4n+3} V_j \\
&= \sum_{0 \leq r < 4} \sum_{0 \leq j \leq n} V_{4j+r} \\
&= \sum_{0 \leq r < 4} \sum_{0 \leq j \leq n} \Gamma_r V_j
\quad\quad\text{(by \eqref{gamma_rec})}\\
&= \sum_{0 \leq r < 4} \Gamma_r \sum_{0 \leq j \leq n} V_j \\
&= \left( \sum_{0 \leq r < 4} \Gamma_r \right) \left( \sum_{0 \leq j \leq
    n} V_j \right) \\
&= 2I  \sum_{0 \leq j \leq n} V_j \\
&= 2A(n).
\end{align*}
Now $S(n)$ is the first component of $A(n)$, so we have $S(4n+3) =
2S(n)$.  We thus have
\[
S(4n+r) = S(4n+3) - \sum_{r < \ell \leq 3} i_{4n+\ell} = 2S(n) -
\sum_{r < \ell \leq 3} i_{4n+\ell}.
\]
Applying the relations of Proposition~\ref{relations} now gives the
claimed relations for $S(n)$.
\end{proof}

\begin{corollary}\label{parity}
Let $n$ be a positive integer. Then $S(n)$ and $n$ have opposite parity.
\end{corollary}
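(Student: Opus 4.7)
The plan is to reduce the claim directly to a mod-$2$ computation using the recurrences established in Proposition~\ref{sumIdentities}. Given a positive integer $n$, I would write $n = 4m+r$ with $m \geq 0$ and $r \in \{0,1,2,3\}$ and invoke the corresponding formula for $S(4m+r)$. No induction is needed, since each recurrence expresses $S(n)$ in terms of $S(m)$ together with one or two terms of the sequence $(i_k)$.

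The key observation is that every $i_k \in \{-1,+1\}$ is odd, while $2S(m)$ is even. Thus, working modulo $2$, the recurrences for $r \in \{0,2\}$ give $S(n) \equiv -i_m \equiv 1 \pmod 2$; the recurrence for $r = 3$ gives $S(n) \equiv 0 \pmod 2$; and the recurrence for $r = 1$ gives $S(n) \equiv i_{2m} - i_m \equiv 0 \pmod 2$, since the difference of two odd numbers is even. In each case the parity of $S(n)$ is the opposite of the parity of $r$, which is the parity of $n$.

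There is essentially no obstacle here: Proposition~\ref{sumIdentities} does all the work, and the rest is an elementary four-case parity check. The only mild subtlety is verifying that the recurrences are valid in the boundary case $m=0$ (so that small values of $n$, such as $n=1,2,3$, are covered without a separate base-case argument), which follows from inspecting the proof of Proposition~\ref{sumIdentities}: the derivation of $A(4n+3) = 2 A(n)$ from \eqref{gamma_rec} is valid for every $n \geq 0$.
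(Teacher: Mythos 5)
Your proof is correct and is exactly the argument the paper intends: the corollary is stated without proof immediately after Proposition~\ref{sumIdentities}, and reducing each of the four recurrences modulo $2$ (using that every $i_k = \pm 1$ is odd and $2S(m)$ is even) is the evident justification. Your observation that no induction is needed, and that the $m=0$ boundary case is covered because $A(4n+3)=2A(n)$ holds for all $n \geq 0$, is a correct and welcome bit of care.
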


\begin{corollary}
Let $n$ be a positive integer. Then $$\frac{S(n) - 2}{2} \leq S\Bigg(\Big\lfloor \frac{n}{4} \Big\rfloor\Bigg) \leq \frac{S(n) + 2}{2}$$
\end{corollary}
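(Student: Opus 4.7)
The plan is to write $n = 4m+r$ with $r \in \{0,1,2,3\}$, so that $\lfloor n/4\rfloor = m$, and then directly invoke Proposition~\ref{sumIdentities} to express the difference $S(n) - 2S(m)$ in each of the four residue classes. The four identities give
\[
S(n) - 2S(m) \in \{-i_m,\; -i_m + i_{2m},\; -i_m,\; 0\}
\]
according to whether $r = 0,1,2,3$, respectively.

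Since $i_m, i_{2m} \in \{-1,+1\}$, in every case we have $|S(n) - 2S(m)| \leq 2$ (with the bound being at most $1$ when $r \in \{0,2\}$, exactly $0$ when $r = 3$, and at most $2$ when $r = 1$). Rearranging this two-sided bound,
\[
-2 \;\leq\; S(n) - 2S\!\left(\!\left\lfloor \tfrac{n}{4}\right\rfloor\!\right) \;\leq\; 2,
\]
and dividing by $2$ yields exactly the claimed inequality.

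There is essentially no obstacle here: the work has already been done in Proposition~\ref{sumIdentities}, and the corollary is a mechanical case analysis together with the trivial observation that $|i_k| = 1$. The only thing to be slightly careful about is to check that the case $r=1$ really does saturate the bound of $2$ (it can, since $i_m$ and $i_{2m}$ can have opposite signs), which justifies why $2$ is the correct constant on both sides of the inequality.
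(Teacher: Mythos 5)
Your proof is correct and is exactly the intended argument: the paper states this as an immediate corollary of Proposition~\ref{sumIdentities}, and your case analysis on $n \bmod 4$ with the trivial bound $|i_m|, |i_{2m}| \leq 1$ is precisely how it follows. Nothing further is needed.
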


Next we identify the positions of certain local maxima and minima
of $S(n)$.  For a positive integer $k$ define the interval: $I_k =
[2^{2k -1}, 2^{2k+1} - 1]$.

\begin{theorem}\label{I_k_max}
For all $k\geq1$, if $n\in I_k$, then $S(n) \leq 2^{k+1}$. Moreover, $S(n) = 2^{k+1}$ only when $n = 2^{2k+1}-1$.
\end{theorem}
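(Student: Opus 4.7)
The plan is to proceed by strong induction on $k$, using Proposition~\ref{sumIdentities} to reduce $S(n)$ for $n \in I_{k+1}$ to $S(m)$ for $m = \lfloor n/4 \rfloor$. The map $n \mapsto \lfloor n/4 \rfloor$ is a bijection between $I_{k+1}$ (stratified by residue mod $4$) and $I_k \times \{0,1,2,3\}$, so writing $n = 4m + r$ with $r \in \{0,1,2,3\}$ forces $m \in I_k$, and Proposition~\ref{sumIdentities} gives
\[
S(n) \;=\; 2S(m) + \delta, \qquad \delta \in \{-i_m,\ -i_m + i_{2m},\ -i_m,\ 0\},
\]
so $\delta \le 2$. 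The naive induction would give only $S(n) \le 2\cdot 2^{k+1} + 2 = 2^{k+2}+2$; the $+2$ is genuinely attained whenever $r=1$, $i_m=-1$, $i_{2m}=+1$, so closing the gap forces a stronger hypothesis. I would induct on:

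\medskip
\emph{For every $n \in I_k$, $S(n) \le 2^{k+1}$ with equality only at $n = 2^{2k+1}-1$; and whenever $S(n) = 2^{k+1}-1$, the pair $(i_n, i_{2n})$ is not $(-1,+1)$.}
\medskip

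The base case $k=1$ is a direct finite check. For the inductive step I would split into cases according to $S(m)$. If $m = 2^{2k+1}-1$, the binary expansion of $m$ is $2k+1$ consecutive ones, so $i_m = +1$ and $i_{2m} = -1$, and Proposition~\ref{sumIdentities} yields $S(4m+r) \in \{2^{k+2}-1,\ 2^{k+2}-2,\ 2^{k+2}-1,\ 2^{k+2}\}$, with the maximum attained uniquely at $r=3$, i.e., at $n = 2^{2k+3}-1$. If $S(m) \le 2^{k+1}-2$, the naive bound gives $S(n) \le 2^{k+2}-2$ directly. If $S(m) = 2^{k+1}-1$ and $m \ne 2^{2k+1}-1$, the strengthened hypothesis excludes $(i_m, i_{2m}) = (-1,+1)$, hence $-i_m + i_{2m} \le 0$; then $S(4m+1) \le 2^{k+2}-2$ and $S(4m+r) \le 2^{k+2}-1$ for $r \in \{0,2,3\}$.

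To propagate the second clause, Corollary~\ref{parity} shows that $S(n) = 2^{k+2}-1$ forces $n$ to be even, hence $r \in \{0,2\}$. For such $n$, the identities $i_{4m} = i_m$ and $i_{4m+2} = -i_{2m}$ from Proposition~\ref{relations}, together with $i_{8m} = i_{2m}$ and $i_{8m+4} = i_{4(2m+1)} = i_{2m+1} = i_m$, express $(i_n, i_{2n})$ in terms of $(i_m, i_{2m})$. The value $S(n) = 2^{k+2}-1$ is actually attained only in two subcases --- the extremal $m = 2^{2k+1}-1$ with $(i_m, i_{2m}) = (+1,-1)$, and $S(m) = 2^{k+1}-1$ with $(i_m, i_{2m}) = (-1,-1)$ (which is forced by the strong IH once $i_m = -1$ is required to realise the value) --- and a direct substitution in each confirms $(i_n, i_{2n}) \ne (-1,+1)$. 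The main obstacle is locating this strengthening: the naive single-clause induction really does lose by exactly $2$, and the loss is attained at isolated $m$, so one must track some extra information about $(i_m, i_{2m})$ when $S(m)$ is almost maximal. Once the extra clause is in hand, every step is a routine application of Propositions~\ref{relations} and~\ref{sumIdentities}.
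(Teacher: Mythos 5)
Your proposal is correct, and it diverges from the paper precisely at the delicate point: ruling out $S(4m+1) = 2^{k+2}$ for a non-extremal $m$ with $S(m) = 2^{k+1}-1$ and $(i_m, i_{2m}) = (-1,+1)$. The paper handles this without strengthening the induction hypothesis: assuming $S(4m+1) = 2^{k+2}$, it deduces $t_m = -1$, $i_m = -1$, $S(m) = 2^{k+1}-1$, and then observes $S(m-1) = S(m) - i_m = 2^{k+1}$, so that $m-1$ attains the maximum on $I_k$ and, by the inductive uniqueness, must equal $2^{2k+1}-1$, forcing $m = 2^{2k+1} \notin I_k$, a contradiction. You instead carry the extra invariant that $(i_n, i_{2n}) \neq (-1,+1)$ whenever $S(n) = 2^{k+1}-1$, and your propagation of that clause is sound: Corollary~\ref{parity} forces $n$ even, the parity of $2S(m) + \delta$ shows the value $2^{k+2}-1$ arises only from the two subcases you list, and the identities $i_{4m} = i_m$, $i_{4m+2} = -i_{2m}$, $i_{8m} = i_{2m}$, $i_{8m+4} = i_m$ close the loop. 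The paper's route is shorter but hinges on the slightly ad hoc step back from $m$ to $m-1$; yours is more systematic and makes the obstruction explicit, at the cost of an additional clause to verify at every stage. One small correction to your framing: the ``naive'' bound is not $2^{k+2}+2$, because the induction hypothesis already contains the uniqueness statement, so a non-extremal $m$ satisfies $S(m) \leq 2^{k+1}-1$ and the bound $S(n) \leq 2^{k+2}$ follows at once (this is exactly the paper's Case~1); the strengthening is genuinely needed only to establish \emph{where} the value $2^{k+2}$ can be attained.
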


\begin{proof}
We proceed by induction on $k$.
The result clearly holds for $k=1$, so suppose the result holds for some $k\geq 1$ and consider $n\in I_{k+1} = [2^{2(k+1)-1},2^{2(k+1) +1}-1] =  [2^{2k+1}, 2^{2k+3} - 1]$. It will be useful for us to write $n = 4m+d$ for some positive integer $m$ and $d\in \{0,1,2,3\}$. Further, we make the observation that $m\in I_k$ for any $n$ in $I_{k+1}$.
\medskip

\noindent\underline{Case 1}: $m \neq 2^{2k+1}-1$. \\
By the induction hypothesis, $S(m) \leq 2^{k+1}-1$. Thus
\begin{eqnarray*}
S(n) = S(4m+d) &\leq& 2S(m)+2 \\
                          &\leq& 2(2^{k+1}-1) +2 \\
                         &=& 2^{k+2}.
\end{eqnarray*}

\noindent\underline{Case 2}: $m = 2^{2k+1}-1$. \\
Again by the induction hypothesis, $S(m) = 2^{k+1}$. We have 4 subcases: \\
\underline{$n = 4m+3$}: By Proposition \ref{sumIdentities}, $S(n) = 2S(m) = 2^{k+2}$.
\\
\underline{$n = 4m+2$}:
Then $n = 2^{2(k+1)+1} - 2$. We make the observation that in base 2, $n+1$ consists  only of $2(k+1)+1$ ones. Hence, $\inv_2(n+1) = 0$ and so $i_{n+1} = 1$. This yields: $$S(n) = S(n+1) - i_{n+1} = 2^{k+2} - 1 \leq 2^{k+2}$$ by the above subcase.  \\
\underline{$n = 4m + 0$}: Here $n = 2^{2(k+1)+1} - 4$. Observe that the base 2 representation of $m$ consists of exactly $2k+1$ ones, and hence $i_{m} = 1$ (since $m$ will have no inversions). Thus $$S(4m) = 2S(m) - i_m = 2^{k+2} -1 \leq 2^{k+2}. $$
\underline{$n = 4m+1$}: Here, $n$ may be expressed as $n = 2^{2(k+1)+1} - 3$. We claim $n$ has an odd number of inversions since its binary representation consists of $2(k+1)-1$ ones followed by `01'. It follows that $i_{4m+1} = -1$, giving $$S(4m+1) = S(4m) +i_{4m+1} = S(4m) - 1 \leq 2^{k+2} - 2.$$
It should also be noted that using induction and the above identities, $$S(2^{2k+3}-1) = 2S(4(2^{2k+1}-1) +3) = 2S(2^{2k+1}-1) = 2^{k+2}$$ for all $k \geq 1$.

It remains to show that the only position at which $S(n) = 2^{k+1}$ is $n= 2^{2k+1}-1$.  Let $n\in I_{k+1} = [2^{2k+1}, 2^{2k+3} - 1]$ and suppose that $S(n) = 2^{k+2}$. Since $S(n)$ is even, $n$ must be odd. Then either $n = 4m+1$ or $n = 4m+3$ for some integer $m$. Suppose the former. Then,
$$S(n) = S(4m+1) = 2S(m) - i_{m} + i_{2m} = 2S(m) - i_{m} + i_mt_m = 2S(m) +i_m(t_m - 1).$$
Obviously $t_m = \pm 1$. Suppose that $t_m = +1$. Then we get that
$S(n) = 2S(m) = 2^{k+2}$ and thus, $S(m) = 2^{k+1}$. Now by the
induction hypothesis, $m = 2^{2k+1} - 1$. Then in base $2$, $m =
111\dotsm1$ ($2k+1$ ones), contradicting the fact that $t_m = +1$. So then it must be that indeed $t_m = -1$. Moreover, if $i_{m}(t_m-1) = -2$, then $S(m) = 2^{k+1}+1$, a contradiction, since $m \in I_k$. Hence $i_{m} = -1$ and it follows that $$S(n) = 2S(m) +2 = 2^{k+2},$$ which implies that $S(m) = 2^{k+1} - 1$.

Observe that $$S(m-1) = S(m) - i_{m}  = (2^{k+1} - 1) +1 = 2^{k+1}.$$ Consequently, $S(m-1)$ achieves the maximum for $I_k$ and so $m-1$ is the endpoint for the interval $I_k$. This yields that $m$ is in fact the first element in $I_{k+1}$. In other words, $m = 2^{2k+1}$, contradicting the fact that $m \in I_k$. Thus we finally conclude that $n \neq 4m+1$.

We now claim that $m = 2^{2k+1}-1$. Suppose that it isn't. Then by the induction hypothesis, $S(m) \leq 2^{k+1}-1$. By the above argument, $n = 4m+3$, so we have
\begin{equation*}
    2^{k+2} = S(n) = 2S(m) \leq 2(2^{k+1}-1) = 2^{k+2}-2 < 2^{k+2}
  \end{equation*}
  which is a contradiction. Hence the only possible choice of $n$ is $n = 4(2^{2k+1}-1)+3 = 2^{2k+3}-1 =2^{2(k+1)+1}-1.$ We have already seen that $S(n)$ is indeed $2^{k+2}$, so this completes the proof.
\end{proof}

\begin{corollary}\label{maxLimit} $\lim\limits_{k \rightarrow \infty} \frac{S(2^{2k+1}-1)}{\sqrt{2^{2k+1}-1}} = \sqrt{2}$.
\end{corollary}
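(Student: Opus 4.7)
The plan is to invoke Theorem~\ref{I_k_max} directly. That theorem identifies $n = 2^{2k+1}-1$ as the unique maximum of $S$ on $I_k$ and, more importantly for our purposes, evaluates it: $S(2^{2k+1}-1) = 2^{k+1}$. (This exact value is also verified inductively inside the proof of Theorem~\ref{I_k_max} via the identity $S(2^{2k+3}-1) = 2 S(2^{2k+1}-1)$, which comes from $S(4n+3) = 2S(n)$ in Proposition~\ref{sumIdentities}.)

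With that input the corollary becomes a one-line computation. I would substitute the exact value of the numerator and rewrite the ratio as
\[
\frac{S(2^{2k+1}-1)}{\sqrt{2^{2k+1}-1}} = \frac{2^{k+1}}{\sqrt{2^{2k+1}-1}} = \frac{2^{k+1}}{2^k \sqrt{2}} \cdot \frac{1}{\sqrt{1 - 2^{-(2k+1)}}} = \sqrt{2} \cdot \frac{1}{\sqrt{1 - 2^{-(2k+1)}}}.
\]
As $k \to \infty$ the factor $\bigl(1 - 2^{-(2k+1)}\bigr)^{-1/2}$ tends to $1$, so the limit is $\sqrt{2}$.

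There is no real obstacle here; the content of the corollary is essentially a restatement of Theorem~\ref{I_k_max} in asymptotic form, showing that the periodic function $G$ of Theorem~\ref{sum_growth} attains the value $\sqrt{2}$ (its upper envelope, as displayed in Figure~\ref{summatory}) along the subsequence $N = 2^{2k+1}-1$, which corresponds to $\log_4 N \to 1/2 \pmod 1$.
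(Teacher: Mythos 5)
Your proposal is correct and matches the paper's intended derivation: the paper states the corollary without proof precisely because it follows immediately from the exact value $S(2^{2k+1}-1) = 2^{k+1}$ established in (the proof of) Theorem~\ref{I_k_max}, after which the limit computation is routine. Nothing is missing.
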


\begin{theorem}\label{I_k_min}
For $k \geq 1$ and $n \in I_k$, $S(n) \geq 2^{k-1}$. Moreover, $S(n) = 2^{k-1}$ if and only if $n = 3 \cdot 4^{k-1} -1.$
\end{theorem}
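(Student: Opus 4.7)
The plan is to induct on $k$, paralleling the proof of Theorem~\ref{I_k_max} but running the recurrences of Proposition~\ref{sumIdentities} in the minimizing direction. The base case $k = 1$ is a direct check on $I_1 = [2,7]$: the partial sums are $1, 2, 3, 2, 3, 4$, so the minimum is $1 = 2^0$ and is attained only at $n = 2 = 3 \cdot 4^0 - 1$. For the inductive step I would write $n \in I_{k+1}$ as $n = 4m + d$ with $d \in \{0, 1, 2, 3\}$ and observe that $m \in I_k$, so the inductive hypothesis applies to $m$.

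Proposition~\ref{sumIdentities} gives $S(4m + d) - 2S(m) \in \{-i_m,\, -i_m + i_{2m},\, -i_m,\, 0\}$ for $d = 0, 1, 2, 3$, hence $S(4m + d) \geq 2S(m) - 2$, with equality possible only when $d = 1$, $i_m = +1$, and $i_{2m} = -1$. I would then split into the two cases suggested by the inductive hypothesis.

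Case~2, in which $m = 3 \cdot 4^{k-1} - 1$, is the easy case: the IH gives $S(m) = 2^{k-1}$, and the binary expansions $m = 10\,\underbrace{1 \cdots 1}_{2k-2}$ and $2m = 10\,\underbrace{1 \cdots 1}_{2k-2}\,0$ yield $\inv_2(m) = 1$ and $\inv_2(2m) = 2k$, so $i_m = -1$ and $i_{2m} = +1$. Substituting into Proposition~\ref{sumIdentities} gives $S(4m) = S(4m + 2) = 2^k + 1$, $S(4m + 1) = 2^k + 2$, and $S(4m + 3) = 2^k$, so the value $2^k$ is attained uniquely at $n = 4m + 3 = 3 \cdot 4^k - 1$.

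Case~1, in which $m \neq 3 \cdot 4^{k-1} - 1$, is where the real work lies, and is the main obstacle. By the IH and integrality of $S$, $S(m) \geq 2^{k-1} + 1$, so $S(n) \geq 2^k$; the task is to rule out equality. Equality would force $d = 1$, $i_m = +1$, $i_{2m} = -1$, and $S(m) = 2^{k-1} + 1$; then $S(m - 1) = S(m) - i_m = 2^{k-1}$. If $m - 1 \in I_k$, the IH identifies $m - 1 = 3 \cdot 4^{k-1} - 1$, i.e.\ $m = 3 \cdot 4^{k-1}$, whose binary expansion $11\,\underbrace{0 \cdots 0}_{2k-2}$ gives $\inv_2(2m) = 2(2k - 1)$, so $i_{2m} = +1$, contradicting $i_{2m} = -1$. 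Otherwise $m = 2^{2k-1}$ is the left endpoint of $I_k$, so $m - 1 = 2^{2(k-1) + 1} - 1 \in I_{k-1}$, and Theorem~\ref{I_k_max} gives $S(m - 1) = 2^k \neq 2^{k-1}$, again a contradiction. Hence Case~1 yields strict inequality $S(n) > 2^k$, and the induction is complete. The delicate feature of this last step is that it requires both an appeal to Theorem~\ref{I_k_max} (to dispose of the left-endpoint subcase $m = 2^{2k-1}$) and an explicit $\inv_2$ computation at the unique candidate $m = 3 \cdot 4^{k-1}$.
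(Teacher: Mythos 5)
Your proof is correct and follows essentially the same route as the paper's: induction on $k$, writing $n = 4m+d$ with $m\in I_k$, splitting on whether $m = 3\cdot 4^{k-1}-1$, and tracing the equality case back through $S(m-1)=2^{k-1}$ to pin down $m$ and derive a contradiction from the sign of $i_{2m}$. If anything, your version is slightly more careful than the paper's at the final step, since you explicitly dispose of the left-endpoint possibility $m = 2^{2k-1}$ (where $m-1\notin I_k$) and give the explicit $\inv_2$ computation at $m = 3\cdot 4^{k-1}$.
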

\begin{proof} This theorem is true for $k=1$, so assume the result for
  an arbitrary $k \geq 1$ and consider $I_{k+1}=
  [2^{2(k+1)-1},2^{2(k+1)+1}-1]$. Let $n$ be in $I_{k+1}$. As before,
  we will let $n=4m+d$,  where $d \in \{0,1,2,3\}$.  Note that $m \in I_k$.
  We consider 2 cases.
\medskip

\noindent \underline{Case 1}: $m = 3 \cdot 4^{k-1} -1$. \\
\noindent Then $S(m) = 2^{k-1}$. The possibilities for $n$ are $n_d =
4(3\cdot4^{k-1} -1)+d = 3 \cdot 4^{k} -(4-d)$, for $d \in
\{0,1,2,3\}$. Now observe that $n_3 = 3 \cdot 4^k -1$ expressed in
binary has the form $$10 \underbrace{11\cdots 1}_{\text{$2k$ `1's}}.$$
It follows that $i_{n_3} = -1$. By observing the binary expansions of
$n_2, n_1, n_0$, we can determine that $i_{n_2} = -1, i_{n_1} = +1$ and
$i_{n_0} = -1$. By Proposition \ref{sumIdentities}, $S(n_3) =
2^k$. Working backwards from $n_3$, it can be seen that $S(n_d) > 2^k$
for $d = 0,1,2$. Hence in this case, the only position in which $S(n)
= 2^{k}$ is $n = 3 \cdot 4^{k} -1$.
\medskip

\noindent  \underline{Case 2}: $m \neq 3 \cdot 4^{k-1} -1$. \\
In this case, $S(n) \geq 2S(m) - 2 \geq 2(2^{k-1}+1) -2 = 2^{k}$, which is all we need.

Having now established the lower bound, we now only need to show that it is unique. Assume $S(n) = 2^k$. This implies that $n$ is odd, so we begin by supposing $n = 4m +1$. Then
$$S(n) = S(4m+1) = 2S(m) - i_{m} + i_{2m} = 2S(m) - i_{m} + i_mt_m = 2S(m) +i_m(t_m - 1).$$
In a fashion similar to that seen in the upper bound, we find that
$i_m = +1$ and $m \neq 3 \cdot 4^{k-1}-1$. Hence $S(m-1) = S(m) -
i_{m} = 2^{k-1}+1-1 = 2^{k-1}$. By the induction hypothesis, there is
only one value in $I_k$ such that $S(m_0)=2^{k-1}$. Namely $m_0 = 3
\cdot 4^{k-1}-1$. Hence $m = m_0 +1 = 3 \cdot 4^{k-1}$. We may thus
conclude that the only possibility for $n$ in this case is $n = 4(3
\cdot 4^{k-1}) + 1$. Under examination of the binary representation of
$n$ and $n-1$ as well as the fact that $n-2 = 4m_0+3$ and consequently
$S(n-2) = 2S(m_0)=2^{k}$, we find that this is not the case. Hence $n$ has the form $4m+3$.

If $m \neq 3 \cdot 4^{k-1} -1$, then $S(m) \geq 2^{k-1} +1$. By Proposition \ref{sumIdentities}, $S(n) \geq 2(2^{k-1}+1) > 2^{k}$, contradicting the assumption that $S(n) = 2^k$. It follows that $n = 4m+3 = 4(3 \cdot 4^{k-1} -1) +3 = 3 \cdot 4^{k} -1$ is the only possibility. As we have already verified that $S(n)$ does indeed equal $2^{k}$ for this value of $n$, we have a unique minimum for $S(n)$ on $I_{k+1}$. The result now follows.
\end{proof}

Theorems~\ref{I_k_min} and \ref{I_k_max} show that
\[
\liminf_{n \to \infty} \frac{S(n)}{\sqrt{n}} \leq \frac{\sqrt{3}}{3}
\quad\text{and}\quad
\limsup_{n \to \infty} \frac{S(n)}{\sqrt{n}} \geq \sqrt{2},
\]
respectively.  In the next section, we will show that the lower and
upper limits are in fact equal to $\sqrt{3}/3$ and $\sqrt{2}$.  That
is, we will prove

\begin{theorem}\label{lim_inf_sup}
We have
\[
\liminf_{n \to \infty} \frac{S(n)}{\sqrt{n}} = \frac{\sqrt{3}}{3}
\quad\text{and}\quad
\limsup_{n \to \infty} \frac{S(n)}{\sqrt{n}} = \sqrt{2}.
\]
\end{theorem}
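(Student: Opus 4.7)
The plan is to identify the extrema of the continuous $1$-periodic function $G$ from Theorem~\ref{sum_growth}. Since the fractional parts $\{\log_4 N \bmod 1\}$ are dense in $[0,1)$ (by halving the density of $\{\log_2 N \bmod 1\}$) and $G$ is continuous, we have $\limsup_N S(N)/\sqrt N = \max G$ and $\liminf_N S(N)/\sqrt N = \min G$. Corollary~\ref{maxLimit} already yields $\max G \ge \sqrt 2$, and Theorem~\ref{I_k_min} applied at $n = 3\cdot 4^{k-1}-1$ gives $\min G \le \sqrt 3/3$, so only the reverse inequalities remain.

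The technical core is the pair of self-similarity identities
\[
S(2^{2k}+n') = 2^k + T(n'), \qquad S(2^{2k-1}+n'') = 2^k - T(n''),
\]
valid for $n' \in [0, 2^{2k}-1]$ and $n'' \in [0, 2^{2k-1}-1]$, where $T(n) := \sum_{j=0}^n i_{2j}$ is the second coordinate of the vector $A(n)$ from Theorem~\ref{thm3_5_1}. These follow from $S(2^\ell-1) = 2^{\lceil \ell/2\rceil}$ (obtained by iterating $A(4n+3)=2A(n)$ from $A(0) = (1,1)^\top$) together with the congruences $i_{2^{2k}+n'} = i_{2n'}$ and $i_{2^{2k-1}+n''} = -i_{2n''}$, each a direct count of the inversions contributed by the leading binary $1$ of the left summand.

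Applying Theorem~\ref{thm3_5_1} to the second coordinate yields a continuous periodic $F_2$ with $T(n)/\sqrt n \to F_2(\log_4 n)$, and an induction parallel to those of Theorems~\ref{I_k_max} and~\ref{I_k_min} (using $T(4n+3)=2T(n)$ together with the other three $T$-recurrences derived from the matrices $\Gamma_r$) shows $F_2 \in [0,1]$, with $F_2(0) = 1$ (from $T(4^k-1) = 2^k$) and $F_2(1/2) = 0$ (from $T(2^{2k+1}-1) = 0$). Substituting $n = 2^\ell(1+y)$ into the two identities and passing to the limit expresses $G$ on $[0,1/2)$ and on $[1/2,1)$ as univariate expressions of the form $(1\pm\sqrt y\, F_2(\alpha))/\sqrt{1+y}$, where $\alpha = \alpha(y)$ is a coordinate shift of $\log_4 y \bmod 1$ fixed by the substitution. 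Optimising over $y \in [0,1)$, the coupling between $y$ and $\alpha$ (in particular, $F_2=1$ occurs only when $\alpha \equiv 0 \bmod 1$, forcing $y$ into the discrete set $\{4^{k}/2 : k \le 0\}$) pins down $\max G = \sqrt 2$ (attained as $y\to 1^-$ on $[0,1/2)$, where $F_2(\alpha)\to F_2(0)=1$) and $\min G = \sqrt 3/3$ (attained at $y=1/2$ on $[1/2,1)$, where $\alpha=0$ so $F_2=1$, giving $(\sqrt 2 - 1/\sqrt 2)/\sqrt{3/2} = \sqrt 3/3$). The main obstacle is the parallel induction bounding $F_2$, which requires a sign-pattern case analysis on $(i_m, i_{2m})$ analogous to (but logically independent of) the proof of Theorem~\ref{I_k_max}; with this in hand, the final optimisation is a direct calculation.
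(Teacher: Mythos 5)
Your self-similarity identities are correct and the overall strategy is genuinely different from the paper's (which proves the pointwise bounds $S(n)/\sqrt n>1/\sqrt3$ for all $n\ge 1$ and $S(n)/\sqrt n<\sqrt2$ for $n\ge 8$, $n\ne 2^{2k+1}-1$, directly by induction over dyadic subintervals). Indeed $i_{2^{\ell}+m}=(-1)^{\ell-s_2(m)}i_m$, so $i_{2^{2k}+m}=i_mt_m=i_{2m}$ and $i_{2^{2k-1}+m}=-i_{2m}$, and $A(2^{\ell}-1)$ is computed by iterating $A(4n+3)=2A(n)$; hence $S(2^{2k}+n')=2^k+T(n')$ and $S(2^{2k-1}+n'')=2^k-T(n'')$ as you claim. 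For the $\limsup$ this reduction does work cleanly \emph{provided} you prove $0\le T(m)\le\sqrt{m+1}$, but be aware that this is not a footnote: it is an induction of the same weight as the paper's Theorem~\ref{thm_J_k}, and you have only named it.

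The genuine gap is in the $\liminf$ half. The information you propose to extract about $F_2$ --- that $F_2\in[0,1]$ with $F_2(\alpha)=1$ only at $\alpha\equiv 0$ --- is quantitatively insufficient. On $n=2^{2k-1}(1+y)$ your formula gives $S(n)/\sqrt n\approx\bigl(\sqrt2-\sqrt y\,F_2(\log_4 y+\tfrac12)\bigr)/\sqrt{1+y}$, and bounding $F_2$ by $1$ yields only $\min G\ge\min_{0\le y\le 1}(\sqrt2-\sqrt y)/\sqrt{1+y}=1-\tfrac{1}{\sqrt2}\approx 0.293$, well below $\sqrt3/3\approx 0.577$. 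What you actually need is $F_2(\log_4 y+\tfrac12)\le h(y):=\bigl(\sqrt2-\sqrt{(1+y)/3}\bigr)/\sqrt y$ for all $y\in(0,1)$; since $h(1/2)=1=F_2(0)$ and $h(y)<1$ for $y>1/2$, this is a bound on $F_2$ that is \emph{tangent} to its maximum at the critical point, e.g.\ it forces $T(m)\lesssim 0.598\sqrt m$ as $m\to 2^{2k-1}$. No soft argument about where $F_2$ attains the value $1$ can produce such a rate; you would need a full interval-by-interval estimate of $T$ against the envelope $2^k-\sqrt{(2^{2k-1}+m)/3}$, which is precisely the hard content of the paper's Theorem~\ref{lower_bound} (there carried out for $S$ itself, using the translation identities of Lemma~\ref{dividingI} and the minimum location from Theorem~\ref{I_k_min}). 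So the concluding ``direct calculation'' does not close the lower bound; the reformulation in terms of $T$ relocates the difficulty but does not remove it.
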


\section{Establishing the upper and lower limits of $S(n)/\sqrt{n}$}
The following lemma provides us with some tools to work with for the proof of the upper limit of $S(n)/\sqrt{n}$.

\begin{lemma}\label{dividingI}
\begin{align}  & S(n + 2^{2k}) = -S(n) + 3(2^{k}),  & 2^{2k} \leq n \leq 2^{2k+1} -1, k \geq 1; \label{eqq3}\\
               & S(n + 3 \cdot 2^{2k}) = S(n) + 2^{k},  & 0 \leq n \leq 2^{2k} -1, k \geq 1;  \label{eqq4}\\
               & S(n + 2^{2k+1}) = -S(n) + 2^{k+2}\label{eqq2},     & 2^{2k+1} \leq n \leq 2^{2k+2} -1, k \geq 1; \\
               & S(n + 3 \cdot 2^{2k+1}) = S(n) + 2^{k+1},  & 0 \leq n \leq 2^{2k+1} -1, k \geq 1. \label{eqq1}
\end{align}
\end{lemma}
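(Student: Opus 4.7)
The plan is to reduce each of the four identities to a telescoping sum of the form $\sum i_{c+j}$ by first establishing two ``prefix formulas'' for how the values $i_n$ behave under a binary shift. For $0 \le j < 2^s$, the binary expansion of $2^s + j$ consists of a leading $1$ followed by the length-$s$ expansion of $j$, and the new inversions $10$ introduced by this leading $1$ are precisely those it forms with the $s - s_2(j)$ zeros of $j$. A parity count therefore yields
\[
i_{2^s + j} = (-1)^s\, i_j\, t_j \qquad (0 \le j < 2^s).
\]
Applied to $3\cdot 2^s + j = 2^{s+1} + 2^s + j$, the two leading $1$'s contribute an even number $2(s - s_2(j))$ of new inversions, giving
\[
i_{3\cdot 2^s + j} = i_j \qquad (0 \le j < 2^s).
\]

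With these in hand, identities \eqref{eqq4} and \eqref{eqq1} will fall out immediately from the second formula. Writing $S(n+3\cdot 2^{2k}) = S(3\cdot 2^{2k}-1) + \sum_{j=0}^n i_{3\cdot 2^{2k}+j}$ collapses the sum to $S(n)$, reducing \eqref{eqq4} to the boundary evaluation $S(3\cdot 4^k - 1) = 2^k$, which is exactly the content of Theorem~\ref{I_k_min}. Identity \eqref{eqq1} reduces in the same way to $S(3\cdot 2^{2k+1} - 1) = 2^{k+1}$; this value is not directly in Theorem~\ref{I_k_min}, but since $3\cdot 2^{2k+1} - 1 = 4(3\cdot 2^{2k-1} - 1) + 3$, a short induction on the recurrence $S(4m+3) = 2S(m)$ of Proposition~\ref{sumIdentities} starting from $S(5) = 2$ delivers it.

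For \eqref{eqq3} and \eqref{eqq2} I will exploit the sign factor $(-1)^s$ in the first prefix formula. In \eqref{eqq3}, setting $n = 2^{2k}+j$ with $0 \le j \le 2^{2k}-1$, I expand both $S(n)$ and $S(n + 2^{2k}) = S(2^{2k+1}+j)$ as a boundary value plus a partial sum involving $i_m t_m$; since $i_{2^{2k}+m} = i_m t_m$ but $i_{2^{2k+1}+m} = -i_m t_m$, the two partial sums cancel on addition, leaving
\[
S(n+2^{2k}) + S(n) = S(2^{2k+1}-1) + S(2^{2k}-1) = 2^{k+1} + 2^k = 3\cdot 2^k.
\]
Here $S(2^{2k+1}-1) = 2^{k+1}$ comes from Theorem~\ref{I_k_max}, while $S(2^{2k}-1) = 2^k$ follows from iterating $S(4m+3) = 2S(m)$ down to $S(3) = 2$. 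The argument for \eqref{eqq2} is identical with $n = 2^{2k+1}+j$, now yielding $S(2^{2k+2}-1) + S(2^{2k+1}-1) = 2^{k+1} + 2^{k+1} = 2^{k+2}$.

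I do not anticipate a serious obstacle: the whole argument is bookkeeping once the two prefix formulas are in place. The points that most require care are tracking the parity of the exponent $s$ in the first formula so that the cancellation in \eqref{eqq3} and \eqref{eqq2} really occurs, and keeping straight the four boundary values $S(2^{2k}-1)$, $S(2^{2k+1}-1)$, $S(3\cdot 2^{2k}-1)$, $S(3\cdot 2^{2k+1}-1)$, each of which either appears in a previous theorem or drops out of a single iteration of $S(4m+3) = 2S(m)$.
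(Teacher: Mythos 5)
Your proof is correct and follows essentially the same route as the paper's: both arguments attribute the change in $\inv_2$ under adding $2^s$ or $3\cdot 2^s$ to the inversions created by the new leading $1$'s (your prefix formulas $i_{2^s+j}=(-1)^s i_j t_j$ and $i_{3\cdot 2^s+j}=i_j$ are exactly the term-by-term relations the paper establishes), and both then telescope and evaluate the boundary values via Theorem~\ref{I_k_max}, Theorem~\ref{I_k_min}, and iteration of $S(4m+3)=2S(m)$. The only difference is cosmetic: you treat all four identities uniformly, whereas the paper proves \eqref{eqq2} and \eqref{eqq1} in detail and declares \eqref{eqq3} and \eqref{eqq4} similar.
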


\begin{proof} Consider equation (\ref{eqq2}). We will show
  that for an arbitrary $k \geq 1$, $S(2^{2k+2}) + S(2^{2k+1})=
  2^{k+2}$, so that by rearranging we obtain equation (\ref{eqq2}) with $n = 2^{2k+1}$. Observing the binary representations and using Theorem~\ref{I_k_max}, we find that $S(2^{2k+1}) = 2^{k+1}-1$. So we must show that $S(2^{2k+2}) = 2^{k+1}+1$, or equivalently (again using the binary representation), that $S(2^{2k+2}-1) = 2^{k+1}$. It may be verified that this is true for $k=1$, so we proceed via induction on $k$. Assuming the result holds for $k$, we consider the $k+1$ case. $$S(2^{2(k+1)+2}-1) = S(4(2^{2k+2}-1)+3) = 2S(2^{2k+2}-1) = 2(2^{k+1}) = 2^{k+2},$$ hence the result is true for all $k \geq 1$.

Now, for $2^{2k+1} \leq j \leq 2^{2k+2} -1$, we claim that it must be the case that $i_j = -i_{j + 2^{2k+1}}$. This is because the difference in the inversion counts of $j$ and $j+2^{2k+1}$ can be attributed solely to those obtained from their respective leading `1's. In fact, the leading `1' of the latter term will give exactly one more inversion than the former. Hence the parity of the inversion counts will be different. It now follows that starting with $n = 2^{2k+1}$ and increasing $n$ successively by one, that $S(n+ 2^{2k+1}) + S(n)= 2^{k+2}$ for each $n$ in the interval $[2^{2k+1},2^{2k+2}-1]$.

We will now prove (\ref{eqq1}). Our first order of business will be to show that for any $k \geq 1$, $S(3 \cdot 2^{2k+1}) = 2^{k+1} + 1$. Considering the binary representation of $3 \cdot 2^{2k+1}$, we find that  $S(3 \cdot 2^{2k+1}) = S(3 \cdot 2^{2k+1}-1)+1$. Therefore it will be sufficient to show that $S(3 \cdot 2^{2k+1}-1) = 2^{k+1}$. For $k=1$ we have $S(23) = 4$, so suppose the result holds for some $k \geq 1$ and consider $k+1$. Since $3 \cdot 2^{2(k+1)+1}-1 = 4(3\cdot 2^{2k+1}-1) +3$, Proposition (\ref{sumIdentities}) gives us that $S(3 \cdot 2^{2(k+1)+1}-1) = 2S(3\cdot 2^{2k+1}-1) = 2^{k+2}$ as desired.

It may be observed from the binary representations that $i_{j + 3 \cdot 2^{2k+1}} = i_j$ for $0 \leq j \leq 2^{2k+1} -1$. This stems from the fact that for each $j$ in this interval, $j + 3 \cdot 2^{2k+1}$ has a different inversion count from $j$ only due to the 2 leading `1's, which can be disregarded when considering the parity of the number of inversions. It now follows that starting with $n = 0$ and increasing $n$ successively by one, that $S(n + 3 \cdot 2^{2k+1}) = S(n) + 2^{k+1}$ for each $n$ in the domain of equation (\ref{eqq1}).

Equations~\eqref{eqq3} and \eqref{eqq4} may be proved in a similar fashion.
\end{proof}

\subsection{Outline of the proof of the upper limit}
In order to prove the upper limit of $\frac{S(n)}{\sqrt{n}}$, our
argument becomes a little bit messy, so we give a brief outline of our
approach:  Recall that $I_k = [2^{2k -1}, 2^{2k+1} - 1]$. Lemma~\ref{dividingI} leads naturally to the following division of $I_{k}\setminus \{2^{2k+1}-1\}$:
\begin{align*} &I_{k,1} = [2^{2k-1}, 3\cdot 2^{2k-2}-1] &&\qquad I_{k,2}=[3\cdot 2^{2k-2}, 2^{2k}-1]\\
   &I_{k,3}= [2^{2k}, 3\cdot 2^{2k-1}-1] &&\qquad I_{k,4} = [3\cdot 2^{2k-1}, 2^{2k+1} -2]. \end{align*}
We attempt to prove that for $n \geq 8$, if $n \neq 2^{2k+1}-1$ for $k \geq 1$, then $\frac{S(n)}{\sqrt{n}} < \sqrt{2}$.
$I_{k,1}$ and $I_{k,2}$ are taken care of by first establishing that the maximum $S(n)$ value on these two intervals is $2^k$, after which the result falls out quite nicely.

 The proof for the interval $I_{k,3}$ demands that we split it up into
 several sub-intervals based on the equations of Lemma~\ref{dividingI}. We show that a local max on $[2^{2k},3\cdot 2^{2k-1}-1]$ occurs at $n_0 = 5\cdot 2^{2(k-1)}$, with $S(n_0) = 3\cdot 2^{k-1}$, which effectively cuts $I_{k,3}$ in half. The algebra comes together for the second half of this division, but the first still requires some work.

Using the formulae once more, we cut this new subinterval into two pieces, $[2^{2k}, 3^2 \cdot 2^{2(k-1) -1}-1]$ and $[3^2 \cdot 2^{2(k-1)-1}, 5\cdot 2^{2(k-1)}-1]$. Again the algebra follows for the latter half, but not the former. We then determine that a max on the former interval occurs at $n = 17 \cdot 2^{2(k-3)}-1$, giving $S(n) \leq 5\cdot 2^{k-2}$, which is strong enough to finally allow us to obtain the desired inequality.

The last interval $I_{k,4}$, with the exception of $n = 2^{2k+1}-1$, ends up being dispatched with relative ease using some simple algebra.

This result along with Theorem \ref{I_k_max} and Corollary \ref{maxLimit} is enough to give the desired result.

\subsection{Establishing the upper limit}
\begin{theorem}\label{sqrt2} Let $n \geq 8$. If $n \neq 2^{2k+1}-1$ for $k \geq 1$, then $\frac{S(n)}{\sqrt{n}} < \sqrt{2} $.
\end{theorem}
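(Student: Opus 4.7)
The plan is to carry out the four-sub-interval case analysis sketched in the outline, using the recurrences of Lemma~\ref{dividingI} to ``unfold'' $S(n)$ on $I_k$ into $\pm S(m)+c$ with $m$ in a smaller interval. I would organize the whole argument as a strong induction on $k$, verifying the base case $k=2$ (i.e., $n\in[8,30]$) by direct computation.

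For the first half $I_{k,1}\cup I_{k,2}=[2^{2k-1},2^{2k}-1]$ the key step is to prove the sharpened bound $S(n)\le 2^k$; combined with $n\ge 2^{2k-1}$ this gives $S(n)/\sqrt{n}\le\sqrt{2}$, strict because $S(2^{2k-1})=2^k-1$ while $\sqrt{n}>\sqrt{2^{2k-1}}$ for $n>2^{2k-1}$. The bound $S(n)\le 2^k$ is established using \eqref{eqq4} on $I_{k,2}$ (reducing to $S(m)\le 2^{k-1}$ on $[0,2^{2k-2}-1]$, handled by the induction hypothesis together with Theorem~\ref{I_k_max}) and \eqref{eqq3} on $I_{k,1}$ (reducing to the companion lower bound $S(m)\ge 2^{k-1}$ on the second half of $I_{k-1}$, itself proved by a further quarter-wise split via \eqref{eqq2} and \eqref{eqq1}).

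For $I_{k,3}$, apply \eqref{eqq2} to write $S(n)=2^{k+1}-S(n-2^{2k-1})$, then split $I_{k,3}$ at $n_0=5\cdot 2^{2k-2}$ according as $n-2^{2k-1}$ lies in $I_{k,1}$ or $I_{k,2}$. The right half $[n_0,3\cdot 2^{2k-1}-1]$ yields $S(n)\le 3\cdot 2^{k-1}-1$ (from $S\ge 2^{k-1}+1$ on $I_{k,2}$, which follows from \eqref{eqq4} and the elementary $S\ge 1$), and then $n\ge n_0$ gives $S(n)/\sqrt{n}\le 3/\sqrt{5}<\sqrt{2}$. The left half $[2^{2k},n_0-1]$ unfolds to $S(n)=S(n-3\cdot 2^{2k-2})+2^{k-1}$ with the shifted argument in the second half of $I_{k-1}$; split it once more at $9\cdot 2^{2k-3}$. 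On the right sub-piece, Theorem~\ref{I_k_max} gives $S(n)\le 3\cdot 2^{k-1}$ with equality only at $n_0-1$, and $n\ge 9\cdot 2^{2k-3}$ then yields $S(n)/\sqrt{n}<\sqrt{2}$. On the left sub-piece $[2^{2k},9\cdot 2^{2k-3}-1]$, one more application of \eqref{eqq2} combined with the lower bound $S\ge 2^{k-2}$ on the first half of $I_{k-1}$ (Theorem~\ref{I_k_min}) pins down $S(n)\le 5\cdot 2^{k-2}$; then $n\ge 2^{2k}$ gives the ratio at most $5/4<\sqrt{2}$.

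Finally, for $n\in I_{k,4}\setminus\{2^{2k+1}-1\}$, equation \eqref{eqq1} gives $S(n)=S(m)+2^k$ with $m\in[0,2^{2k-1}-2]$; expanding $S(n)^2<2n$ reduces to $S(m)^2+2^{k+1}S(m)<2m+2^{2k+1}$, which follows from the induction hypothesis $S(m)^2<2m$ for non-exceptional $m$ (plus direct checks at $m<8$ and at $m=2^{2j+1}-1$ with $j\le k-2$, where $S(m)^2=2m+2$) together with the bound $S(m)\le 2^k-1$ from Theorem~\ref{I_k_max}. The main obstacle is the nested sub-division of $I_{k,3}$: the crude bound $S\le 3\cdot 2^{k-1}$ is too weak near the left endpoint $n=2^{2k}$, and pinning down the secondary local maximum $5\cdot 2^{k-2}$ at $n=17\cdot 4^{k-2}-1$ requires two further recursive applications of Lemma~\ref{dividingI} and careful bookkeeping about which sub-piece of $I_{k-1}$ the argument gets pushed into.
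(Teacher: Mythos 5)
Your proposal is correct, and for the intervals $I_{k,3}$ and $I_{k,4}$ it follows the paper's argument essentially verbatim: the same split at $5\cdot 2^{2k-2}$ via \eqref{eqq2}, the same bounds $3/\sqrt{5}$ and $5/4$ on the two sub-pieces (the paper additionally pins the secondary maximum to $n=17\cdot 4^{k-2}-1$, which your cruder bound $S\le 5\cdot 2^{k-2}$ makes unnecessary), and the same use of \eqref{eqq1} on $I_{k,4}$. Where you genuinely diverge is the first half $[2^{2k-1},2^{2k}-1]$: the paper obtains $S(n)\le 2^k$ there as a corollary of Theorem~\ref{thm_J_k}, whose proof occupies Proposition~\ref{formProp}, two lemmas about integers of the form \eqref{theform}, and a delicate case analysis; you instead propose a mutual induction, via Lemma~\ref{dividingI} alone, between the statements ``$S\le 2^j$ on the first half of $I_j$'' and ``$S\ge 2^j$ on the second half of $I_j$'' (the first reduces to the second one level down via \eqref{eqq3}/\eqref{eqq4} plus Theorem~\ref{I_k_max} on the lower-order intervals, and the second reduces back to the first via \eqref{eqq2}/\eqref{eqq1} plus $S\ge 1$). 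I checked that this closes correctly and it is a real simplification if all one wants is Theorem~\ref{sqrt2}; what the paper's heavier route buys is the exact characterization of the maximizers on $J_k$, which your argument does not recover. One caution: this auxiliary induction must be run as its own simultaneous induction on $j$ --- the main theorem's hypothesis $S(m)<\sqrt{2m}$ only gives $S(m)<2^{k-1}\sqrt{2}$ on $[2^{2k-3},2^{2k-2}-1]$, which is too weak to supply the needed $S(m)\le 2^{k-1}$ there, so ``the induction hypothesis'' in your Case~1 must refer to the auxiliary bound one level down, not to the ratio bound. Finally, your treatment of $I_{k,4}$, with explicit checks at $m<8$ and $m=2^{2j+1}-1$ where $S(m)^2\ge 2m$, is actually more careful than the paper's, which applies $S(n')<\sqrt{2n'}$ without excluding those values of $n'$.
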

In order to prove the above, we must first develop some useful tools. The following few results show that if $n \in [2^{2k-1}, 2^{2k}-1]$, then $S(n) \leq 2^k$ for $k \geq 1$.

\begin{proposition}\label{formProp} Suppose $k \geq 1$. If $m_0$ is of the form \begin{equation}\label{theform} 2^{2k} -1 - \sum\limits_{r=0}^{k-2}\varepsilon_r(3\cdot 2^{2r+1}) - \beta \end{equation} for some combination of $\varepsilon_r\text{{\rm{'s}}} \in\{0,1\}$ and $\beta \in \{0,2\}$, then $4 m_0 + 3$ is also of the above form.
\end{proposition}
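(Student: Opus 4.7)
The plan is a direct algebraic manipulation: expand $4m_0+3$, shift the summation index by one, and then absorb the residual constant $4\beta$ into a new $r=0$ term of the sum plus a new $\beta'$. In particular, I expect to show that if $m_0$ has the stated form at level $k$, then $4m_0+3$ has the stated form at level $k+1$, with an explicit rule for how the tuple $(\varepsilon_0,\ldots,\varepsilon_{k-2},\beta)$ transforms into a new tuple $(\varepsilon_0',\ldots,\varepsilon_{k-1}',\beta')$.

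First, I would substitute and distribute to obtain
\[
4m_0+3 \;=\; 4\bigl(2^{2k}-1\bigr)+3 \;-\; \sum_{r=0}^{k-2}\varepsilon_r\bigl(3\cdot 2^{2r+3}\bigr) \;-\; 4\beta \;=\; 2^{2(k+1)}-1 \;-\; \sum_{r=0}^{k-2}\varepsilon_r\bigl(3\cdot 2^{2r+3}\bigr) \;-\; 4\beta.
\]
Next, I would reindex the sum by setting $r'=r+1$, which converts $\sum_{r=0}^{k-2}\varepsilon_r(3\cdot 2^{2r+3})$ into $\sum_{r'=1}^{k-1}\varepsilon_{r'-1}(3\cdot 2^{2r'+1})$. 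This already matches the desired shape for all indices $r'=1,\ldots,k-1=(k+1)-2$; only the constant $4\beta$ remains to be accounted for.

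The main (and only delicate) step is handling the constant $-4\beta$. Since $\beta\in\{0,2\}$, we have $4\beta\in\{0,8\}$. The key observation is the identity $8 = 6+2 = (3\cdot 2^{1})+2$, which lets me write
\[
4\beta \;=\; \tfrac{\beta}{2}\cdot(3\cdot 2^{1}) + \beta,
\]
so defining $\varepsilon_0' := \beta/2 \in\{0,1\}$ and $\beta' := \beta \in\{0,2\}$, together with $\varepsilon_r' := \varepsilon_{r-1}$ for $1\le r\le k-1$, yields
\[
4m_0+3 \;=\; 2^{2(k+1)}-1 \;-\; \sum_{r=0}^{(k+1)-2}\varepsilon_r'\bigl(3\cdot 2^{2r+1}\bigr) \;-\; \beta',
\]
which is precisely the desired form at level $k+1$.

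The hard part, such as it is, is recognizing that $\beta=2$ produces a $4\beta=8$ that does not fit into a single slot of type $3\cdot 2^{2r+1}$ or into $\beta'$ alone, but splits cleanly between both; once this splitting $8=6+2$ is noticed, the rest is a routine reindexing. No deeper machinery is needed — the proposition is essentially a bookkeeping lemma that will later feed an induction on $k$ to control $S(m_0)$ via Proposition~\ref{sumIdentities}.
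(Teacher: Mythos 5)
Your proof is correct and follows essentially the same route as the paper's: expand $4m_0+3$, reindex the sum by one, and absorb the leftover $-4\beta$ into a new $\varepsilon_0$ slot (via $8 = 3\cdot 2^1 + 2$) together with $\beta'=\beta$. The only cosmetic difference is that the paper treats $\beta=0$ and $\beta=2$ as two separate cases while you unify them with the formula $\varepsilon_0'=\beta/2$.
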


\begin{proof} First let $m_0 = 2^{2k} -1 - \sum\limits_{r=0}^{k-2}\varepsilon_r(3\cdot 2^{2r+1})$. Then
\begin{align*} 4 m_0 +3 &= 4\left( 2^{2k} -1 - \sum\limits_{r=0}^{k-2}\varepsilon_r(3\cdot 2^{2r+1})\right) +3 \\
                        &= 2^{2(k+1)} -4 - \sum\limits_{r=0}^{k-2}\varepsilon_r(3\cdot 2^{2(r+1)+1}) +3\\
                        &= 2^{2(k+1)} -1 - \sum\limits_{s=1}^{k-1}\varepsilon_{s-1}(3\cdot 2^{2{s}+1}) \qquad (\text{letting } s = r+1).\\
\end{align*}
By letting $\varepsilon_0 = 0$ and re-indexing the $\varepsilon_s$ so that the summands have the form $\varepsilon_{s}(3\cdot 2^{2{s}+1})$, we see that the above is indeed of the desired form. The case where $$m_0 = 2^{2k} -1 - \sum\limits_{r=0}^{k-2}\varepsilon_r(3\cdot 2^{2r+1}) -2$$ is similar, although in this case we will have $\varepsilon_0 = 1$.
\end{proof}

\begin{lemma}
If $n$ may be written in the form seen in equation {\rm{(\ref{theform})}} for some combination of $\varepsilon_r\text{'s} \in\{0,1\}$ and $\beta \in \{0,2\}$, then $i_n = +1$.
\end{lemma}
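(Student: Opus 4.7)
The plan is to show $\inv_2(n)$ is even, using the identity
\[
\inv_2(n) = \sum_{p \in P} p - \binom{|P|}{2},
\]
where $P$ is the set of positions (with position $0$ the least significant bit) of $1$-bits in the binary expansion of $n$. This identity follows by writing, for each $1$-position $p$, the number of $0$-positions strictly below $p$ as $p$ minus the number of $1$-positions strictly below $p$, and summing. Since $\sum_{p \in P} p \equiv O \pmod{2}$ for $O = |\{p \in P : p \text{ odd}\}|$, and since $\binom{m}{2} \equiv \lfloor m/2 \rfloor \pmod{2}$, it is enough to prove $O = \lfloor |P|/2 \rfloor$. This in turn reduces to showing that the zeroed set $Z = \{0,1,\ldots,2k-1\} \setminus P$ has either the same number of odd and even positions, or exactly one more odd than even.

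To identify $Z$, let $E = \{r : \varepsilon_r = 1\}$. When no borrow occurs, i.e.\ $\beta = 0$ or $0 \notin E$, each active chunk $3 \cdot 2^{2r+1} = 2^{2r+1} + 2^{2r+2}$ simply zeros positions $2r+1$ (odd) and $2r+2$ (even) of $2^{2k}-1$, and the chunks are pairwise disjoint; if $\beta = 2$ there is an additional zero at position $1$ (odd). So $Z$ consists of $|E|$ balanced odd-even pairs plus possibly an extra odd from $\beta$, giving the required balance immediately.

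The remaining case is $\beta = 2$ with $0 \in E$, where a cascading borrow occurs because position $1$ has already been zeroed by chunk $0$. I would set $j = \min(\{0,1,\ldots,k-1\} \setminus E) \geq 1$ and then verify, by unwinding the iterated subtraction step by step, that the net effect on the binary representation is: positions $1$ through $2j$ revert to $1$, position $2j+1$ becomes $0$, and all chunks for $r \in E$ with $r > j$ are unaffected. Hence $Z = \{2j+1\} \cup \bigcup_{r \in E,\, r > j} \{2r+1, 2r+2\}$, which is again a single odd position ($2j+1$) plus $|E_{>j}|$ balanced pairs; in the corner case $j = k-1$, where $E = \{0,1,\ldots,k-2\}$, the borrow consumes the top bit $2k-1$ so $n$ has only $2k-1$ bits, but the odd/even counting is unchanged.

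The main obstacle is the bookkeeping for the borrow regime, particularly tracking the cascade through any initial run $\{0, 1, \ldots, j-1\} \subseteq E$ of consecutive chunks and confirming that no further borrows occur at position $2j+2$ or above. Sanity checks on small instances (for example, $k=3$, $E = \{0\}$, $\beta = 2$ gives $n = 55 = 110111_2$; and $k = 3$, $E = \{0,1\}$, $\beta = 2$ gives $n = 31 = 11111_2$) confirm the description of $Z$. Once the structure of $Z$ is pinned down, the odd-even balance is immediate and the conclusion $i_n = +1$ follows in a single line.
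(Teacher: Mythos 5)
Your proposal is correct, but it takes a genuinely different route from the paper. Both arguments begin by pinning down exactly which bit positions of $2^{2k}-1$ get zeroed: each active chunk $3\cdot 2^{2r+1}$ kills the adjacent pair $\{2r+1,2r+2\}$, and $\beta=2$ either kills position $1$ or, when $\varepsilon_0=1$, triggers the borrow cascade you describe (your computation of the cascade is right: the run of zeros at positions $1,\dots,2j$ reverts to ones and the single position $2j+1$ becomes the new zero, with no further carries; your examples $n=55$ and $n=31$ check out, as does the corner case $E=\{0,\dots,k-2\}$, where $n=2^{2k-1}-1$). Where you diverge is in how the parity of $\inv_2(n)$ is then extracted. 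The paper counts inversions block-by-block: for $\beta=0$ the zeros occur in blocks of even length, each contributing an even number of inversions, and the $\beta=2$ case is handled by perturbing the $\beta=0$ configuration (a step the paper treats somewhat informally). You instead invoke the identity $\inv_2(n)=\sum_{p\in P}p-\binom{|P|}{2}$, which is correct as stated (and insensitive to leading zeros), together with the congruences $\sum_{p\in P}p\equiv O$ and $\binom{m}{2}\equiv\lfloor m/2\rfloor \pmod 2$; this reduces the whole lemma to the purely combinatorial observation that the zeroed set has either equally many odd and even positions or exactly one more odd, which your description of $Z$ makes immediate in every case. Your approach buys a uniform, mechanical treatment of $\beta=0$ and $\beta=2$ and a reusable formula for $\inv_2$; the paper's approach avoids introducing the identity but pays for it with the more delicate perturbation argument in the $\beta=2$ case. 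The one piece you leave ``to verify''---the borrow bookkeeping---is routine and you have stated its outcome correctly, so I see no gap.
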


\begin{proof}
Suppose that $$n = 2^{2k} -1 -
\sum\limits_{r=0}^{k-2}\varepsilon_r(3\cdot 2^{2r+1}) - \beta$$ for
some combination of $\varepsilon_r\text{'s} \in\{0,1\}$ and $\beta \in
\{0,2\}$. We note that the binary form of  $2^{2k}-1$ consists of $2k$
1's. Consider the case when $\beta = 0$. Observe that if we label the
digit positions of the binary representation starting from the right and beginning with 0, subtracting $3 \cdot 2^{2r+1}$ from $2^{2k}-1$, for $0 \leq r \leq k-2$, changes the digits in positions $2r+1$ and $2r+2$ from `1's to `0's. It follows that any $n$ of the above form will have `0's only occurring in blocks of even length. This ensures an even number of inversions, which means $i_n = +1$.

Now let $\beta = 2$. Every $n$ of this form may be obtained subtracting 2 from an $n$ of the form in the above case. Since `0's occur in even blocks, this subtraction will turn the block of zeroes adjacent to the `1' in the 0th position (which could possibly be empty) into `1's and the `1' to the left of the block into a `0'. The changing of the even block of zeroes into `1's will change the inversion number by an even amount, so we only need to check that the new `0' does not create an odd number of inversions. However we also know that excluding the left and rightmost `1's, `1's must come in even blocks as well. The new `0' will thus have an even number of `1's to the left of it (since it turns the right digit in a pair of `1's into a `0'). Hence we still have an even number of inversions, so $i_n = +1$.\end{proof}

\begin{lemma}Given an $n$ of the form in equation (\ref{theform}), $S(n) = 2^k$.
\end{lemma}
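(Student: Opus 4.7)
The plan is to induct on $k$. For the base case $k=1$, the sum in (\ref{theform}) is empty, so $n = 3-\beta \in \{1,3\}$, and we verify directly that $S(1)=S(3)=2=2^1$.

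For the inductive step from $k$ to $k+1$, I would write $n = 2^{2k+2}-1 - \sum_{r=0}^{k-1}\varepsilon_r(3\cdot 2^{2r+1})-\beta$ and split on the value of the leading coefficient $\varepsilon_{k-1}$. If $\varepsilon_{k-1}=0$, then $n - 3\cdot 2^{2k} = 2^{2k}-1-\sum_{r=0}^{k-2}\varepsilon_r(3\cdot 2^{2r+1})-\beta$ has exactly the form (\ref{theform}) at parameter $k$. A quick bound check places this value in $[0,2^{2k}-1]$, so equation (\ref{eqq4}) of Lemma~\ref{dividingI} applies and yields $S(n) = S(n - 3\cdot 2^{2k}) + 2^k = 2^{k+1}$ by the induction hypothesis.

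If instead $\varepsilon_{k-1}=1$, then $n = 5\cdot 2^{2k-1} - 1 - \sum_{r=0}^{k-2}\varepsilon_r(3\cdot 2^{2r+1}) - \beta$. In the boundary case where every remaining $\varepsilon_r = 1$ and $\beta = 2$, we have $n = 2^{2k+1}-1$, for which $S(n) = 2^{k+1}$ by Theorem~\ref{I_k_max}. Otherwise let $n_0 = 2^{2k}-1-\sum_{r=0}^{k-2}\varepsilon_r(3\cdot 2^{2r+1}) - \beta$, which is of form (\ref{theform}) at parameter $k$ and so satisfies $S(n_0)=2^k$ by the induction hypothesis. A short calculation gives $n - 2^{2k} = n_0 + 2^{2k-1}$. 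I would then apply equation (\ref{eqq3}) of Lemma~\ref{dividingI} (at the lemma's parameter $k$) to obtain $S(n) = -S(n_0+2^{2k-1}) + 3\cdot 2^k$, and apply equation (\ref{eqq2}) (at the lemma's parameter $k-1$) to obtain $S(n_0 + 2^{2k-1}) = -S(n_0) + 2^{k+1} = 2^k$. Combining yields $S(n) = -2^k + 3\cdot 2^k = 2^{k+1}$.

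The main technical obstacle is that the application of equation (\ref{eqq2}) at parameter $k-1$ requires $k \geq 2$, so the generic step only handles $k \geq 2$; the transition from $k=1$ to $k=2$ has to be verified separately, but this only involves the four values $n \in \{7,9,13,15\}$ and is immediate. The remaining routine bookkeeping is to check that each invocation of Lemma~\ref{dividingI} is applied to a value inside the required interval, which boils down to the bound $\sum_{r=0}^{k-2}\varepsilon_r(3\cdot 2^{2r+1}) + \beta \leq 2^{2k-1}$ (with equality only in the boundary case handled separately via Theorem~\ref{I_k_max}) together with the observation that this sum is always even.
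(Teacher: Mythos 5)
Your proof is correct, but it takes a genuinely different route from the paper's. The paper also inducts on $k$, but it conditions on the low-order data $(\varepsilon_0,\beta)$, which determine $n \bmod 4$: in each of the four resulting cases it rewrites $n$ as $4m+d$ with $m$ (or $m-1$) again of the form (\ref{theform}) and applies the base-$4$ recurrences of Proposition~\ref{sumIdentities}. That forces it to track the correction terms $i_m$, $i_{2m}$, $i_{m-1}$, which is where the preceding lemma (that $i_n=+1$ for $n$ of the form (\ref{theform})) and some binary-representation arguments come in. You instead condition on the high-order coefficient $\varepsilon_{k-1}$ and peel off the leading block using the shift identities (\ref{eqq3}), (\ref{eqq4}), (\ref{eqq2}) of Lemma~\ref{dividingI}; since those identities already encode the sign information globally, you avoid the $i_m$ bookkeeping entirely and need only two cases, plus the boundary value $n=2^{2k+1}-1$ (correctly dispatched via Theorem~\ref{I_k_max}, whose proof establishes that this value is attained) and the $k=1\to 2$ patch forced by the $k\ge 1$ hypothesis in (\ref{eqq2}). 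I checked the interval-membership conditions you defer to ``routine bookkeeping'': since the subtracted quantity $\sum_r \varepsilon_r(3\cdot 2^{2r+1})+\beta$ is even and at most $2^{2k-1}$, outside the boundary case one gets $n_0\ge 2^{2k-1}+1$, so (\ref{eqq3}) at parameter $k$ and (\ref{eqq2}) at parameter $k-1$ both apply as you claim. The trade-off is that your argument leans on equations (\ref{eqq3}) and (\ref{eqq4}), which the paper only proves ``in a similar fashion'' to the other two, whereas the paper's proof is self-contained modulo Proposition~\ref{sumIdentities}; in exchange, yours is shorter and substantially less case-heavy.
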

\begin{proof}
It may be observed that the result is certainly true for $k=1$, so assume that it is true for an arbitrary $k$ and consider $k+1$. Our approach uses Proposition~\ref{sumIdentities} extensively, so it will be useful to note that the only $\varepsilon_r$ that affects the value of $n$ modulo 4 will be $\varepsilon_0$. Since $\beta$ will also affect this value, it is natural to have 4 cases.

\noindent \underline{Case 1}: $\varepsilon_0 = 1, \beta = 2.$\\
We have:
\begin{align*} n &= 2^{2(k+1)} -1 - \sum\limits_{r=1}^{k-1}\varepsilon_r(3\cdot 2^{2r+1}) - 3 \cdot 2 - 2 \\
                 &= 4\left( 2^{2k} - \sum\limits_{r=1}^{k-1}\varepsilon_r(3\cdot 2^{2(r-1)+1}) \right) - 9\\
                 &= 4\left( 2^{2k} - \sum\limits_{r=0}^{k-2}\varepsilon_{r+1}(3\cdot 2^{2r+1}) \right) - 12 + 3\\
                 &= 4 \left( 2^{2k} - 1 -\sum\limits_{r=0}^{k-2}\varepsilon_{r+1}(3\cdot 2^{2r+1}) -2\right) +3.
\end{align*}
From the induction hypothesis, $$S\left(2^{2k} - 1 - \sum\limits_{r=0}^{k-2}\varepsilon_{r+1}(3\cdot 2^{2r+1}) -2\right) = 2^{k},$$ and so by Proposition~\ref{sumIdentities} $S(n) = 2^k$.\\
\underline{Case 2}: $\varepsilon_0 = 0, \beta = 2.$\\
With a bit of algebra, we find that
\begin{align*} n &= 2^{2(k+1)} -1 - \sum\limits_{r=1}^{k-1}\varepsilon_r(3\cdot 2^{2r+1}) - 2 \\
                 &= 4 \left( 2^{2k} -1 - \sum\limits_{r=0}^{k-2}\varepsilon_{r+1}(3\cdot 2^{2r+1}) \right) +1.
\end{align*}
We thus obtain the following equation for $S(n)$:
\begin{align*}S(n) &= S\left( 4 \left( 2^{2k} -1 -\sum\limits_{r=0}^{k-2}\varepsilon_{r+1}(3\cdot 2^{2r+1}) \right) +1 \right)\\
                       &= 2 S(m) -i_m + i_{2m},
\end{align*}
where $m =2^{2k} - 1 -\sum\limits_{r=0}^{k-2}\varepsilon_{r+1}(3\cdot 2^{2r+1})$.
By observing the binary representation of $m$ and $2m$, we find that $-i_m +i_{2m} = 0$. It follows from the induction hypothesis that $S(n) = 2^{k}$. \\
\underline{Case 3}: $\varepsilon_0 = 1, \beta = 0.$\\
It is not too hard to show that
\begin{align*}
n &= 4\left[\left(2^{2k} -1 - \sum\limits_{r=0}^{k-2}\varepsilon_{r+1}(3\cdot 2^{2r+1})\right) -1\right]+1\\
  &= 4(m-1) +1,
\end{align*}
where $$m = 2^{2k} -1 - \sum\limits_{r=0}^{k-2}\varepsilon_{r+1}(3\cdot 2^{2r+1}) -1.$$ From the induction hypothesis and the fact that $i_m = +1$, we obtain that $S(m-1) = 2^{k}-1$. Hence $S(n) =S(4(m-1)+1) = 2S(m-1) - i_{m-1} +i_{2(m-1)}$.

Now $m$ has an even number of `1's and `0's in its binary representation and ends in `01', so $m-1$ will have an odd number of `1's and `0's and end in `00'. The binary representation of $2(m-1)$ will then have an extra `0' at the end, and since there are an odd number of preceding `1's, the parity of its inversion count will be the opposite of $m-1$, ie. $i_{m-1} = -i_{2(m-1)}$.

From the above lemma, we know that $i_m = +1$. Writing $m$ in the form of (\ref{theform}), we find that its $\beta$ value is $0$. Thus $m-2$ may also be written in the same form, which implies $i_{m-2}=+1$. It follows that $i_{m-1} = -1$, and so $S(n)= 2S(m-1) -(-1) +1 = 2^{k+1} - 2 + 2 = 2^{k+1}$ as needed.

The remaining case is similar to Case 1.

\end{proof}

\begin{theorem}\label{thm_J_k}
Let $J_k = [2^{2k-1}-1, 2^{2k}-1]$. Then for $n\in J_k$, $S(n) = 2^k$ if and only if $n$ is of the form in {\rm{(\ref{theform})}} for some combination of $\varepsilon_r\text{\rm{'s}} \in\{0,1\}$ and $\beta \in \{0,2\}$. Moreover, $2^k$ is the maximum value for the partial sum function over $J_k$.
\end{theorem}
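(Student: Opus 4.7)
I would proceed by induction on $k$. The preceding lemma already shows that every $n$ of the stated form satisfies $S(n) = 2^k$, so what remains is the upper bound $S(n) \leq 2^k$ on $J_k$ and the converse implication that $S(n) = 2^k$ forces $n$ to have form (\ref{theform}). The base cases $k = 1$ (where $J_1 = \{1, 2, 3\}$) and $k = 2$ (where $J_2 = [7, 15]$) are direct computations; I keep both because the inductive step invokes Lemma~\ref{dividingI} at index $k-1$ and hence requires $k \geq 2$.

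For the inductive step, assume the statement for some $k \geq 2$ and consider $J_{k+1} = [2^{2k+1}-1, 2^{2k+2}-1]$. The left endpoint is handled separately: $S(2^{2k+1}-1) = 2^{k+1}$ by Theorem~\ref{I_k_max}, and $2^{2k+1}-1$ fits (\ref{theform}) with all $\varepsilon'_r = 1$ and $\beta' = 2$. To treat the rest $[2^{2k+1}, 2^{2k+2}-1]$, I split it at $3 \cdot 2^{2k}$ and apply Lemma~\ref{dividingI}: on the right half $[3\cdot 2^{2k}, 2^{2k+2}-1]$, equation~\eqref{eqq4} gives $S(n) = S(n-3\cdot 2^{2k}) + 2^k$, with argument in $[0, 2^{2k}-1]$; on the left half $[2^{2k+1}, 3\cdot 2^{2k}-1]$, equation~\eqref{eqq3} gives $S(n) = -S(n-2^{2k}) + 3\cdot 2^k$, with argument in $[2^{2k}, 2^{2k+1}-1]$.

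The task then reduces to two auxiliary claims: (A) $S(m) \leq 2^k$ on $[0, 2^{2k}-1]$ with equality exactly at $m$ of form (\ref{theform}) for $k$; and (B) $S(m) \geq 2^k$ on $[2^{2k}, 2^{2k+1}-1]$ with equality exactly when $m \in [2^{2k}, 3\cdot 2^{2k-1}-1]$ and $m - 2^{2k-1}$ has form (\ref{theform}) for $k$. For (A), I combine Theorem~\ref{I_k_max} applied on each $I_j$ for $j \leq k-1$ (which yields $S \leq 2^{j+1} \leq 2^k$, with equality on $I_{k-1}$ only at the right endpoint $2^{2k-1}-1$, which is itself of the form) with the inductive hypothesis on $J_k$. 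For (B), I split $[2^{2k}, 2^{2k+1}-1]$ once more: on $[2^{2k}, 3\cdot 2^{2k-1}-1]$, equation~\eqref{eqq2} with $k-1$ in place of $k$ reduces the question to the inductive hypothesis on $J_k$; on $[3\cdot 2^{2k-1}, 2^{2k+1}-1]$, equation~\eqref{eqq1} with $k-1$ in place of $k$ writes $S$ as $S(\cdot) + 2^k$, which is at least $1 + 2^k > 2^k$ from the trivial bound $S \geq 1$.

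Finally, the equality cases feed back through both halves. A short algebraic manipulation shows that right-half equality cases correspond to forms (\ref{theform}) for $k+1$ with $\varepsilon'_{k-1} = 0$ (absorbing $3\cdot 2^{2k}$ as the new top term of the expansion), while left-half equality cases correspond to forms with $\varepsilon'_{k-1} = 1$ (absorbing $3\cdot 2^{2k-1}$); together with the endpoint these exhaust all forms (\ref{theform}) for $k+1$. The main obstacle is the bookkeeping in (B): four different instances of Lemma~\ref{dividingI} are invoked at two different indices, and the equality tracking across three subintervals of $J_k$ and its neighbours must be done with care, but each individual verification is routine once the scaffolding is in place.
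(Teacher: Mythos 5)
Your proof is correct, but it takes a genuinely different route from the paper's. The paper argues entirely through the base-$4$ recurrences of Proposition~\ref{sumIdentities}: it writes $n=4m_0+d$, dispatches $d=3$ via Proposition~\ref{formProp} (closure of the form under $m\mapsto 4m+3$), and eliminates $d=1$ by a fairly delicate chain of deductions about $S(m_0)$ and the signs $i_{m_0}, i_{2m_0}$, before a separate mod-$4$ case analysis for the upper bound. You instead decompose $J_{k+1}$ as $\{2^{2k+1}-1\}\cup[2^{2k+1},3\cdot 2^{2k}-1]\cup[3\cdot 2^{2k},2^{2k+2}-1]$ and push everything through the translation/reflection identities of Lemma~\ref{dividingI}, at indices $k$ and $k-1$, reducing to Theorem~\ref{I_k_max}, the inductive hypothesis on $J_k$, and the bound $S\geq 1$ (which, note, is not quite ``trivial'' for partial sums of a $\pm 1$ sequence — cite Theorem~\ref{I_k_min} or Corollary~\ref{parity} for it). I checked the pieces: claims (A) and (B) are correct as stated; the identities $3\cdot 2^{2k}+\bigl(2^{2k}-1-\sum_{r=0}^{k-2}\varepsilon_r(3\cdot 2^{2r+1})-\beta\bigr)=2^{2k+2}-1-\sum_{r=0}^{k-2}\varepsilon_r(3\cdot 2^{2r+1})-\beta$ and $3\cdot 2^{2k-1}+2^{2k}-1=2^{2k+2}-1-3\cdot 2^{2k-1}$ confirm that the right- and left-half equality cases are exactly the forms with $\varepsilon_{k-1}=0$ and $\varepsilon_{k-1}=1$ respectively, and the all-ones, $\beta=2$ form is exactly the separated endpoint $2^{2k+1}-1$; your insistence on the extra base case $k=2$ is exactly right, since \eqref{eqq1} and \eqref{eqq2} are only available for index at least $1$. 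What each approach buys: yours makes the uniqueness/equality tracking essentially mechanical (each subinterval maps bijectively onto a region where the extremum and its location are already known) at the cost of heavier interval bookkeeping and a dependence on Lemma~\ref{dividingI}; the paper's mod-$4$ argument is more self-contained and meshes directly with Proposition~\ref{formProp}, but its elimination of the case $n=4m_0+1$ is the least transparent step of the whole section. Your version is arguably cleaner there, and it also foreshadows the interval decomposition reused later in the proof of Theorem~\ref{sqrt2}.
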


\begin{proof}
We proceed by induction.
Observe that this result holds for $J_1$, so suppose it holds true for some $k \geq 1$ and consider $n \in J_{k+1}$.
Suppose $S(n) = 2^{k+1}$, but $n$ is not of the form in
(\ref{theform}) for any combination of $\varepsilon_r$'s and $\beta$'s. First of all, if $n =4m_0 +3$, then $m_0$ is in $J_{k}$, and $S(n) = 2S(m_0)$, giving $S(m_0) = 2^{k}$. By the induction hypothesis, $m_0$ may be written in the form of equation (\ref{theform}). However it follows from Proposition~\ref{formProp} that $4m_0 +3$ may also be written in same form, contradicting the hypothesis. Thus we may assume $n = 4m_0 +1$. \\

\noindent We note that $S(4m_0 +1) = 2S(m_0) -i_{m_0} - i_{2m_0} = 2^{k+1}.$ Some rearranging gives: $$S(m_0) = \frac{(2^{k+1} + i_{m_0} + i_{2m_0})}{2} = 2^{k} + \frac{( i_{m_0} + i_{2m_0})}{2} \in \{2^{k}-1, 2^{k}, 2^{k} +1\}.$$  It follows from the induction hypothesis that $S(m_0) \neq 2^{k}+1$, so we need only consider the other two cases. Suppose $S(m_0)=  2^{k}$. By the induction hypothesis, $i_{m_0}= +1 $ (else $S(m_0 -1) = 2^{k}+1$), and it is easy to see that $i_{2m_0}=  -1$. Furthermore, the induction hypothesis gives that $m_0$ may be written in the form of equation (\ref{theform}), with $\beta = 2$ (since $\beta = 0$ gives that $4m_0 +1$ is also of the form in (\ref{theform}) with $\beta = 2$, which is a contradiction). From this, we can say that $$4m_0 +1 = 2^{2(k+1)}-1 - \sum\limits_{r=0}^{k-1}\varepsilon_r(3\cdot 2^{2r+1}) - 4,$$ and so $4m_0+3$ can be expressed by an equation of the form (\ref{theform}), implying $i_{4m+3} = +1$. Moreover, $i_{4m_0 +2} = -i_{2m_0} = +1$. This gives that
$$2^{k+1} =2S(m_0) = S(4m_0 + 3) = S(4m_0 +1) + i_{4m_0 +2} + i_{4m_0 +3} = 2^{k+1} +2,$$
which is clearly a contradiction. Thus we must have $S(m_0) = 2^{k} -1$. It now follows that $\frac{( i_{m_0} + i_{2m_0})}{2} = -1$, so $i_{m_0} = i_{2m_0} = -1$. Since $i_{4m_0 +1} = i_{2m_0}$, we have
\begin{eqnarray*}
S(4m_0) &=& S(4m_0 +1)-i_{4m_0+1} \\
              &=& 2S(m_0) - i_{m_0}\\
              &=& 2^{k+1} + 1.
\end{eqnarray*}
Therefore $S(4m_0) -1 = 2^{k+1} = 2S(m_0).$ However $2S(m_0) = 2^{k+1}$, implying that $S(m_0) = 2^{k}$,  a contradiction.

Now we must show that $S(n)\leq 2^k$ for $n\in J_k$. Writing $n = 4m+
d$, where $d \in \{ 0,1,2,3\}$, Proposition~\ref{sumIdentities} tells
us that $S(n) > 2^k$ is only possible if $S(m) = 2^{k-1}$. Hence $m$
can be written in the form of equation (\ref{theform}). If $d = 0$,
then Proposition~\ref{sumIdentities} gives us $S(n) \leq 2^{k}+1$. If
we have equality here, this implies that $S(4(m-1)+3) = 2^k$, and
consequently that $S(m)= S(m-1)$, which is clearly impossible. By
Corollary \ref{parity}, $S(4m) \leq 2^k -1$, which gives us that
$S(4m+1) \leq 2^k$. Finally, since we have that $S(4m+3) = 2^k$ and
$i_{4m+3} = +1$,  it follows that $S(4m+2) \leq 2^k -1$. Therefore no value of
 $d$ allows for $S(n)$ to exceed $2^k$, giving the result.
\end{proof}

\noindent We now finally have the necessary tools to prove Theorem \ref{sqrt2}.

\begin{proof}
We will begin by observing that for $n$ in $I_2 \setminus \{31\} = [8,30]$, $\frac{S(n)}{\sqrt{n}} < \sqrt{2}$. So assume that the statement is true for an arbitrary $k \geq 2$ and consider $I_{k+1}\setminus \{2^{2k+1}-1\}$. We will proceed by breaking up this interval into the following 4 pieces:
\begin{align*} &I_{k+1,1} = [2^{2k+1}, 3\cdot 2^{2k}-1] &&\qquad I_{k+1,2}=[3\cdot 2^{2k}, 2^{2k+2}-1]\\
   &I_{k+1,3}= [2^{2k+2}, 3\cdot 2^{2k+1}-1] &&\qquad I_{k+1,4} = [3\cdot 2^{2k+1}, 2^{2k+3} -2]. \end{align*}

\noindent\underline{Case 1}: $n \in I_{k+1,1} \cup I_{k+1,2} = [2^{2k+1}, 2^{2k+2}-1]$.\\
By Theorem~\ref{thm_J_k}, all $S(n)$ values in this range are bounded
above by $2^{k+1}$, so we have
$\frac{S(n)}{\sqrt{n}} \leq \frac{2^{k+1}}{\sqrt{2^{2k+1}}} = \sqrt{2}$. We observe that that equality is possible only when $n = 2^{2k+1}$, but since $S(2^{2k+1}-1) = 2^{k+1}$, we get that $S(2^{2k+1}-1) < 2^{k+1}$, hence the result holds in these two intervals.

\noindent \underline{Case 2:} $n \in I_{k+1,3} =  [2^{2k+2}, 3 \cdot 2^{2k+1}-1]$. \\
Observe that by \eqref{eqq2} $I_{k+1,3}$ is determined entirely by the interval $[2^{2k+1}, 2^{2k+2}-1]$. Since we have that the minimum $S(n)$ value on $I_{k+1}$ occurs at $n = 3 \cdot 2^{2k} -1$, it follows that the maximum on $I_{k+1,3}$ occurs precisely at $n= (3 \cdot 2^{2k} -1) + 2^{2k+1} = 5 \cdot 2^{2k}-1$, with $S(n) = 3 \cdot 2^{k}$. If we consider the interval $[5 \cdot 2^{2k}, 3 \cdot 2^{2k+1}-1]$, we find that
\begin{align*} \frac{S(n)}{\sqrt{n}} \leq \frac{3 \cdot 2^{k}}{\sqrt{5 \cdot 2^{2k}}} = \frac{3}{\sqrt{5}}< \sqrt{2}.
\end{align*}

It remains to show that the bound holds on $[2^{2k+2},5 \cdot 2^{2k}-1]$.
For reasons that will become apparent shortly, it will be convenient
to split this remaining interval into two disjoint pieces,
$[2^{2k+2},9 \cdot 2^{2k-1}-1]$ and $[9 \cdot 2^{2k-1},5\cdot
2^{2k}-1]$. Consider the interval $I_{k,3} = [2^{2k}, 3 \cdot
2^{2k-1}-1]$. We have already established that the unique maximum
value of $S(n)$ on this interval is $3 \cdot 2^{k-1}$, and occurs
exactly at $n_1 = 5 \cdot 2^{2(k-1)} -1$. Using \eqref{eqq3}, we find
that the minimum value on the interval $[2^{2k+1},5 \cdot2^{2k-1}-1]$
occurs at $n_2 = 9 \cdot 2^{2k-2}-1$, with $S(n_2) = 3 \cdot
2^{k-1}$. Finally, we can apply \eqref{eqq2} to obtain that the unique maximum on the interval $[2^{2k+2},9 \cdot 2^{2k-1}-1]$ occurs at $n = 17 \cdot 2^{2(k-2)}-1$, and that $S(n) = 5 \cdot 2^{k-1}$. By some quick algebra, we find that for $n$ in this interval,
$$ \frac{S(n)}{\sqrt{n}} \leq \frac{5 \cdot 2^{k-1}}{\sqrt{2^{2k+2}}} < \sqrt{2}.$$
Lastly we tackle the final piece, $[9 \cdot 2^{2k-1}, 5 \cdot 2^{2k}-1]$. We have that for any $n \in I_{k+1,3}$, $S(n) \leq 3 \cdot 2^{k}$. Thus for any $n$ in this interval
$$ \frac{S(n)}{\sqrt{n}} \leq \frac{3 \cdot 2^{k}}{\sqrt{9 \cdot 2^{2k-1}}} \leq \sqrt{2}.$$
As equality can only hold when $n = 9 \cdot 2^{2k-1}$, we just need to check this value. However, $S(9 \cdot 2^{2k-1}-1) = 3 \cdot 2^{k}$, implying that $S(9 \cdot 2^{2k-1}) \leq 3 \cdot 2^{k}-1$. This gives us a strict inequality and completes the proof for this interval.\\
\underline{Case 3}: $n \in I_{k+1,4} = [3 \cdot 2^{2k+1}, 2^{2k+3}-2]$. \\
Write $n = n'+3 \cdot 2^{2k+1}$.
We observe that \eqref{eqq1} pertains to this interval completely, giving us
\begin{align*}
\frac{S(n)}{\sqrt{n}} &= \frac{S(n' + 3 \cdot 2^{2k+1})}{\sqrt{n'+3
    \cdot 2^{2k+1}}} \\
&= \frac{S(n')}{\sqrt{n'}} \cdot \frac{\sqrt{n'}}{\sqrt{n'+3 \cdot 2^{2k+1}}} +\frac{2^{k+1}}{\sqrt{n'+3 \cdot 2^{2k+1}}}\\
                   & < \frac{\sqrt{2n'} +2^{k+1}}{\sqrt{n'+3\cdot 2^{2k+1}}}.
\end{align*}
Using a little bit of algebra, we find that this is less than $\sqrt{2}$ whenever $0 \leq n' \leq 2^{2k+1}-2$. Since this is within the domain of \eqref{eqq1}, we have the result.

\end{proof}

\subsection{Establishing the lower limit}

\begin{theorem}\label{lower_bound}
 $\frac{S(n)}{\sqrt{n}} > \frac{1}{\sqrt{3}}$ for all $n \geq 1$.
\end{theorem}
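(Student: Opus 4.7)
The plan is to prove the equivalent integer inequality $3S(n)^2 > n$ by strong induction on $n \geq 1$. The base case $n \in \{1, 2, \ldots, 7\}$ is a direct check; the tightest instance is $n=2$, where $S(2) = 1$ and $3S(2)^2 = 3 = n+1$, foreshadowing that $n = 3 \cdot 4^{k-1}-1$ from Theorem~\ref{I_k_min} is precisely the family of positions at which $3S(n)^2$ is closest to $n$.

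For the induction step, let $n \geq 8$ and choose $k \geq 1$ so that $n \in I_{k+1} = [2^{2k+1},\, 2^{2k+3}-1]$. Mirroring the upper-bound proof, I would partition $I_{k+1}$ into the four consecutive subintervals
\[
[2^{2k+1},\, 3\cdot 2^{2k}-1],\ [3\cdot 2^{2k},\, 2^{2k+2}-1],\ [2^{2k+2},\, 3\cdot 2^{2k+1}-1],\ [3\cdot 2^{2k+1},\, 2^{2k+3}-1],
\]
on which the identities \eqref{eqq3}, \eqref{eqq4}, \eqref{eqq2}, and \eqref{eqq1} of Lemma~\ref{dividingI} (with index $k$) respectively apply. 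The key dichotomy is that \eqref{eqq3} and \eqref{eqq2} are reflections, which let me convert an upper bound on $S(m)$ into a lower bound on $S(n)$, while \eqref{eqq4} and \eqref{eqq1} are translations, inviting a direct application of the induction hypothesis.

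The two \emph{reflection} pieces are handled using the already-established upper bounds on $S$. On the first, writing $n = m + 2^{2k}$ with $m \in [2^{2k},\, 2^{2k+1}-1] \subseteq I_k$, Theorem~\ref{I_k_max} gives $S(m) \leq 2^{k+1}$, hence $S(n) = -S(m) + 3 \cdot 2^k \geq 2^k$. Since $n \leq 3 \cdot 4^k - 1 < 3(2^k)^2$, we get $3S(n)^2 \geq 3 \cdot 4^k > n$. The third piece is analogous: write $n = m + 2^{2k+1}$ with $m \in [2^{2k+1},\, 2^{2k+2}-1] \subseteq J_{k+1}$, use Theorem~\ref{thm_J_k} to bound $S(m) \leq 2^{k+1}$, obtain $S(n) \geq 2^{k+1}$, and conclude $3S(n)^2 \geq 3 \cdot 4^{k+1} > 6 \cdot 4^k > n$.

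For the two \emph{translation} pieces, I would use the induction hypothesis directly. On the second, $n = m + 3 \cdot 2^{2k}$ with $m \in [0,\, 2^{2k}-1]$ and $S(n) = S(m) + 2^k$; for $m \geq 1$ the hypothesis $3S(m)^2 > m$ gives $S(m) > \sqrt{m/3}$, and squaring $S(n) > \sqrt{m/3} + 2^k$ yields $S(n)^2 > m/3 + 2 \cdot 2^k \sqrt{m/3} + 4^k > m/3 + 4^k = n/3$, while the boundary case $m = 0$ reduces to $3(1 + 2^k)^2 > 3 \cdot 4^k$, which is immediate. The fourth piece is identical with $2^k$ replaced by $2^{k+1}$ and $3 \cdot 2^{2k}$ by $3 \cdot 2^{2k+1}$. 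I do not anticipate a genuine obstacle: each case reduces to a one-line algebraic inequality, and the worst-case positions $3 \cdot 4^{k} - 1$ of Theorem~\ref{I_k_min} land in the first (reflection) piece, where the slack $3 \cdot 4^k - n = 1$ is exactly enough to preserve strict inequality. The only bookkeeping is verifying that the four subintervals partition $I_{k+1}$ and that every invoked $m$ satisfies $m < n$, both of which are immediate.
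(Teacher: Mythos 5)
Your proposal is correct and follows essentially the same route as the paper's proof: the same four-way split of $I_{k+1}$ dictated by Lemma~\ref{dividingI}, with the reflection identities \eqref{eqq3}, \eqref{eqq2} converted into lower bounds via the upper-bound results and the translation identities \eqref{eqq4}, \eqref{eqq1} handled by the induction hypothesis. The only (harmless) variations are cosmetic: you work with the squared inequality $3S(n)^2 > n$ rather than ratios, derive the bound on the first piece from \eqref{eqq3} together with Theorem~\ref{I_k_max} where the paper quotes Theorem~\ref{I_k_min} directly, and invoke the induction hypothesis on the fourth piece where the paper simply uses that $S \geq 1$ on $[0, 2^{2k+1}-1]$.
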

\begin{proof}
We can certainly observe this for values of $n$ up to 8, so we can assume that it holds true for all $n$ up to and including $I_j$ where $1 \leq j \leq k$, and consider $I_{k+1}$ for $k \geq 1$.

By Theorem~\ref{I_k_min}, the minimum value of $S$ occurs at $n_0 = 3
\cdot 2^{2k}-1$ with $S(n_0)= 2^k$. It follows quite easily that for
any $n \in I_{k+1,1}=[2^{2k+1},3 \cdot 2^{2k}-1]$, the
inequality $$\frac{S(n)}{\sqrt{n}} \geq \frac{2^k}{\sqrt{3 \cdot
    2^{2k}-1}} > \frac{2^k}{\sqrt{3 \cdot 2^{2k}}} =
\frac{1}{\sqrt{3}}$$ is satisfied.

For $I_{k+1,2}$, observe that \eqref{eqq4} applies exactly to this interval. Hence if $n = n' + 3 \cdot 2^{2k}$, the induction hypothesis gives
\begin{align*} \frac{S(n'+3 \cdot 2^{2k})}{\sqrt{n'+3 \cdot 2^{2k}}} &= \frac{S(n')}{\sqrt{n'}} \cdot \frac{\sqrt{n'}}{\sqrt{n'+3 \cdot 2^{2k}}} + \frac{2^k}{\sqrt{n'+3 \cdot 2^{2k}}}\\
           & > \frac{1}{\sqrt{3}} \cdot \frac{\sqrt{n'}}{\sqrt{n'+3 \cdot 2^{2k}}}+ \frac{2^k}{\sqrt{n'+3 \cdot 2^{2k}}}
\end{align*}
for $n' \geq 1$.
We would like $$\frac{1}{\sqrt{3}} \cdot \frac{\sqrt{n'}}{\sqrt{n'+3 \cdot 2^{2k}}}+ \frac{2^k}{\sqrt{n'+3 \cdot 2^{2k}}} \geq \frac{1}{\sqrt{3}},$$ which with a little bit of work, can be shown to be equivalent to
$$\frac{(\sqrt{n'}+ \sqrt{3}\cdot 2^k)^2}{{n'+3 \cdot 2^{2k}}}  \geq 1.$$ As this is true when $n' \geq 0$ and hence on the domain of the equation, we have the result for $I_{k+1,2} \setminus \{3\cdot 2^{2k}\}$. It is easily verified that $S(3\cdot 2^{2k}) = 2^{k}+1$ and that the bound holds for this value as well, giving us the result for $I_{k+1,2}$.

Now consider $n \in I_{k+1,3}= [2^{2k+2}, 3 \cdot 2^{2k+1}-1]$. Recall
that by Lemma \ref{dividingI}, the values of $S$ on $I_{k+1,3}$ are
completely determined by the values of $S$ on $I_{k+1,1} \cup I_{k+1,2}$. We also know that the
maximum value of $S$ on $I_{k+1,1} \cup I_{k+1,2}$ is $2^{k+1}$. In particular,
$S(n) \geq - 2^{k+1} + 2^{k+2} = 2^{k+1}$.

Finally, let $n \in I_{k+1,4} \cup \{2^{2k+3}-1\}$.  Note that the value of $S$ on
$I_{k+1,4} \cup \{2^{2k+3}-1\}$ is completely determined by the value of $S$ on $[0, 2^{2k+1}-1]$. Moreover, the
minimum value of $S$ on $[0, 2^{2k+1}-1]$ is $1$. By equation
\eqref{eqq1} we obtain that $S(n) \geq 1 + 2^{k+1} > 2^{k+1}$. 

Hence for $n \in I_{k+1,3} \cup I_{k+1,4} \cup \{2^{2k+3}-1\}$, the following inequality holds:
$$\frac{S(n)}{\sqrt{n}} \geq \frac{2^{k+1}}{\sqrt{2^{2k+3}-1}} > \frac{2^{k+1}}{\sqrt{2^{2k+3}}} = \frac{1}{\sqrt{2}} > \frac{1}{\sqrt{3}},$$
thus establishing the result for the remaining piece of $I_{k+1}$ and completing the proof.
\end{proof}

Theorem~\ref{lim_inf_sup} now follows from Corollary \ref{maxLimit} and Theorems~\ref{I_k_max},
\ref{I_k_min}, \ref{sqrt2}, \ref{lower_bound}.

\section{Combinatorial properties}\label{combinat_props}

Both the Thue--Morse sequence and the Rudin--Shapiro sequence have
been extensively studied from the point of view of combinatorics on
words.  Indeed, both of these sequences have many interesting
combinatorial properties.  Before collecting some of the combinatorial
properties of the sequence $(i_n)_{n \geq 0}$, we first recall some
basic definitions.

A word of the form $xx$, where $x$ is non-empty, is called a
\emph{square}.  A \emph{cube} has the form $xxx$, and in general, a
\emph{$k$-power} has the form $xx\cdots x$ ($x$ repeated $k$ times)
and is denoted by $x^k$.  A \emph{palindrome} is word that is equal to
its reversal.  We denote the length of a word $x$ by $|x|$.

\begin{theorem}
The sequence $(i_n)_{n \geq 0}$ contains
\begin{enumerate}
\item no $5$-th powers,
\item cubes $x^3$ exactly when $|x| = 3$,
\item squares $xx$ exactly when $|x| \in \{1,2\} \cup \{3\cdot2^k : k \geq 0\}$.
\item arbitrarily long palindromes.
\end{enumerate}
\end{theorem}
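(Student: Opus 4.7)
My plan exploits the morphic representation from Section~\ref{definitions}: writing $\mathbf{i}=(i_n)_{n\geq 0}$, we have $\mathbf{i}=\pi(\mathbf{u})$, where $\mathbf{u}=g^{\omega}(A)$ is the fixed point of the $2$-uniform morphism $g$ and $\pi:\{A,B,C,D\}\to\{+1,-1\}$ is the coding $A,B\mapsto +1$, $C,D\mapsto -1$. The overall method parallels the classical combinatorial analyses of the Rudin--Shapiro sequence.

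For parts (1)--(3) on powers, the strategy is a finite case check combined with a desubstitution descent. The positive claims are handled by exhibiting explicit witnesses: the prefix $({+}{+}{-})^3$ of $\mathbf{i}$ already yields a cube with $|x|=3$ and squares with $|x|\in\{1,3\}$; a square $({+}{-})^2$ with $|x|=2$ appears starting at position~$7$; and squares with $|x|=3\cdot 2^k$ for $k\geq 1$ can be located at predictable positions determined inductively by tracking how $g^k$ expands a fixed length-$3$ cube-inducing factor of $\mathbf{u}$. To rule out the remaining power lengths, I would first enumerate factors of $\mathbf{i}$ below a computable threshold $L_0$ and check that no forbidden $k$-power occurs there; for longer factors, I would run a desubstitution descent: given an occurrence of $x^r$ in $\mathbf{i}$ with $|x|$ large, read the position modulo~$2$, lift to a factor of $\mathbf{u}$ using the $2$-uniformity of $g$, and extract a shorter $r$-power in $\mathbf{i}$. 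Iterating reduces any would-be forbidden power to the finite base check.

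For part (4), I would use a reversal symmetry of $g^2$. Direct calculation gives
\[
g^2(A)=ABCA,\quad g^2(B)=BDAB,\quad g^2(C)=CADC,\quad g^2(D)=DCBD,
\]
and letting $\sigma$ denote the involution on $\{A,B,C,D\}$ that swaps $A\leftrightarrow B$ and $C\leftrightarrow D$, one verifies that $\pi(g^2(x))^R = \pi(g^2(\sigma(x)))$ for every letter $x$. Call a word $w=w_1 w_2\cdots w_n\in\{A,B,C,D\}^*$ \emph{anti-palindromic} if $w_{n+1-i}=\sigma(w_i)$ for every $i$. A block-by-block comparison then shows that this class is closed under $g^2$ and that $\pi(w)$ is an ordinary palindrome whenever $w$ is anti-palindromic. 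Since the seed $w_0=AB$ is anti-palindromic and is a prefix of $\mathbf{u}$, each iterate $g^{2k+1}(A)=g^{2k}(AB)$ is anti-palindromic of length $2^{2k+1}$, and its $\pi$-image is a palindromic prefix of $\mathbf{i}$ of unbounded length.

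The main obstacle is the exact square characterization in part~(3). Because $\pi$ is two-to-one, squares in $\mathbf{i}$ need not descend from squares in $\mathbf{u}$, so the desubstitution must split on the parity of $|x|$ and of the starting position, and on the $\pi$-agreement of letters straddling $g$-block boundaries. The resulting finite but delicate family of subcases is what isolates precisely the lengths $3\cdot 2^k$ (together with the trivial $|x|\in\{1,2\}$) among all lengths compatible with descent.
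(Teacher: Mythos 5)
Your route is genuinely different from the paper's. The paper gives a ``computer proof'': parts (2)--(4) are obtained by running the decision procedure for automatic sequences surveyed in \cite{Sha13}, which outputs automata (Figures~\ref{cube_lengths}--\ref{pal_lengths}) accepting the binary representations of the admissible period lengths, and part (1) is reduced to part (2) plus a finite check. You instead propose a hand analysis via the morphic representation $\mathbf{i}=\pi(g^{\omega}(A))$. Your part (4) is correct and essentially complete: the identities $g^2(A)=ABCA$, $g^2(B)=BDAB$, $g^2(C)=CADC$, $g^2(D)=DCBD$ are right; the relation $g^2(\sigma(x))^R=\sigma(g^2(x))$ (hence $\pi(g^2(x))^R=\pi(g^2(\sigma(x)))$, since $\pi\circ\sigma=\pi$) checks on all four letters; and closure of anti-palindromes under $g^2$ together with the anti-palindromic seed $AB=g(A)$ does yield palindromic prefixes of $\mathbf{i}$ of length $2^{2k+1}$ for every $k$. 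That is a clean, human-checkable argument the paper does not supply.

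Parts (1)--(3) contain a genuine gap beyond the ``delicate family of subcases'' you defer. Your existence mechanism for squares of period $3\cdot 2^k$ --- expanding the cube-inducing factor under $g^k$ --- fails as stated. The cube $({+}{+}{-})^3$ is the $\pi$-image of $ABCABDABC$, which is \emph{not} a cube in $g^{\omega}(A)$; it is a $\pi$-cube only because $\pi(ABC)=\pi(ABD)$, and this coincidence is destroyed by $g$: one has $\pi(g(ABC))=\pi(ABCABD)={+}{+}{-}{+}{+}{-}$ but $\pi(g(ABD))=\pi(ABCADC)={+}{+}{-}{+}{-}{-}$, so $\pi(g(ABCABDABC))$ contains no period-$6$ square at the expected location. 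What rescues the claim is that $g^{\omega}(A)$ contains a genuine square over $\{A,B,C,D\}$, for instance $(ABD)^2$ at position $15$; then $g^k((ABD)^2)$ is an honest square of period $3\cdot 2^k$ and its $\pi$-image is the required witness. You must exhibit such a seed explicitly. For the non-existence direction, the same phenomenon means a square in $\mathbf{i}$ need not lift to a square in $g^{\omega}(A)$ (the period-$3$ cube is exactly such an example), so the desubstitution descent has to be run on the four-letter sequence encoding the pair $(t_n,i_n)$, with the boundary and parity case analysis you have named but not carried out. As written, (1)--(3) is a plausible program rather than a proof, whereas (4) can stand on its own.
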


\begin{proof}
First, note that 1) can be deduced from 2) along with a computer
calculation to verify that there are no $5$-th powers of period $3$.
The proofs of 2)--4) are ``computer proofs''.  The survey \cite{Sha13}
gives an overview of a general method for proving combinatorial
properties of automatic sequences.  We will not explain the method in
any great detail here.  The output of the computer prover is a finite
automaton accepting the binary representation of the lengths of the
squares, cubes, palindromes, etc.\ contained in the sequence of
interest.

Figure~\ref{cube_lengths} shows the automaton accepting the binary
representations of the lengths of the periods of the cubes present in
the sequence.  It is easy to see that the only numbers accepted by the
automaton are $0$ and $3$.  Of course $0$ is not a valid length for
the period of a repetition, but it makes things a little easier
to allow the automaton to accept $0$.

Figure~\ref{square_lengths} shows the automaton accepting the lengths
of the periods of the squares.  Again, it is easy to see from the
structure of the automaton that the non-zero lengths accepted are the
elements of the set $\{1,2\} \cup \{3\cdot2^k : k \geq 0\}$.

Finally, Figure~\ref{pal_lengths} shows the automaton accepting the
lengths of the palindromes.  It is easy to see that this automaton
accepts the binary representations of infinitely many numbers.
\end{proof}

\begin{figure}[p]
\centering
\includegraphics[scale=0.7]{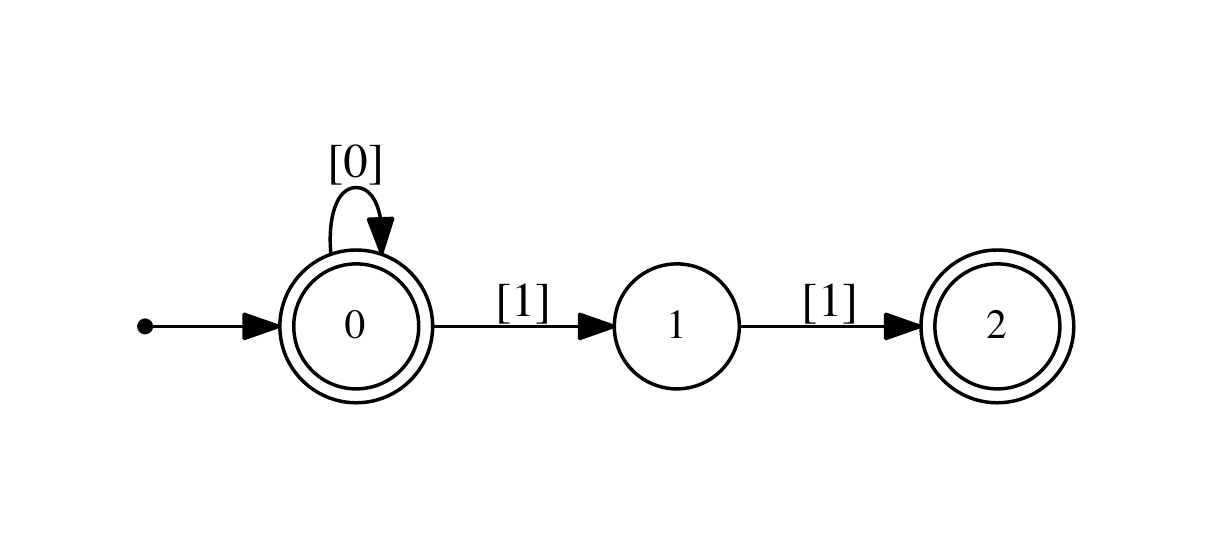}
\caption{Automaton accepting period lengths of cubes in $(i_n)_{n \geq
    0}$}\label{cube_lengths}
\end{figure}

\begin{figure}[p]
\centering
\includegraphics[scale=0.7]{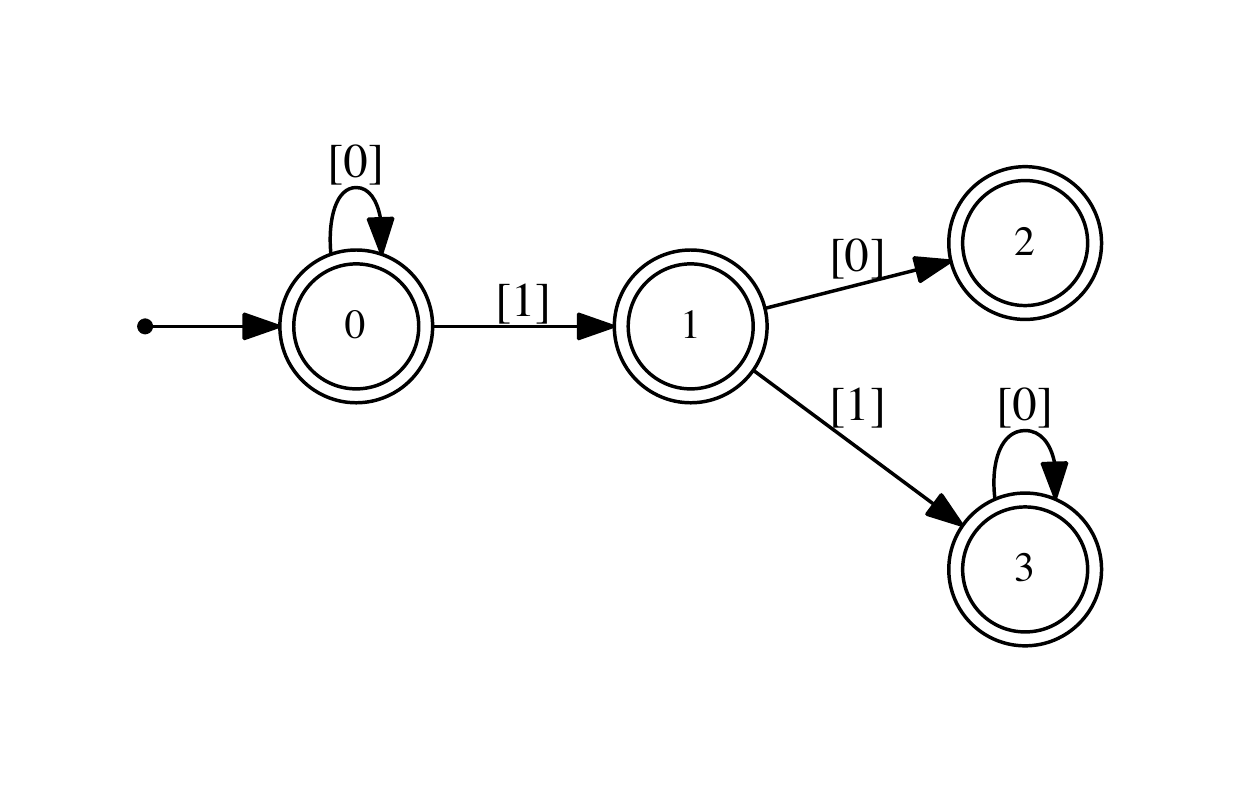}
\caption{Automaton accepting period lengths of squares in $(i_n)_{n \geq
    0}$}\label{square_lengths}
\end{figure}

\begin{figure}[p]
\centering
\includegraphics[scale=0.7]{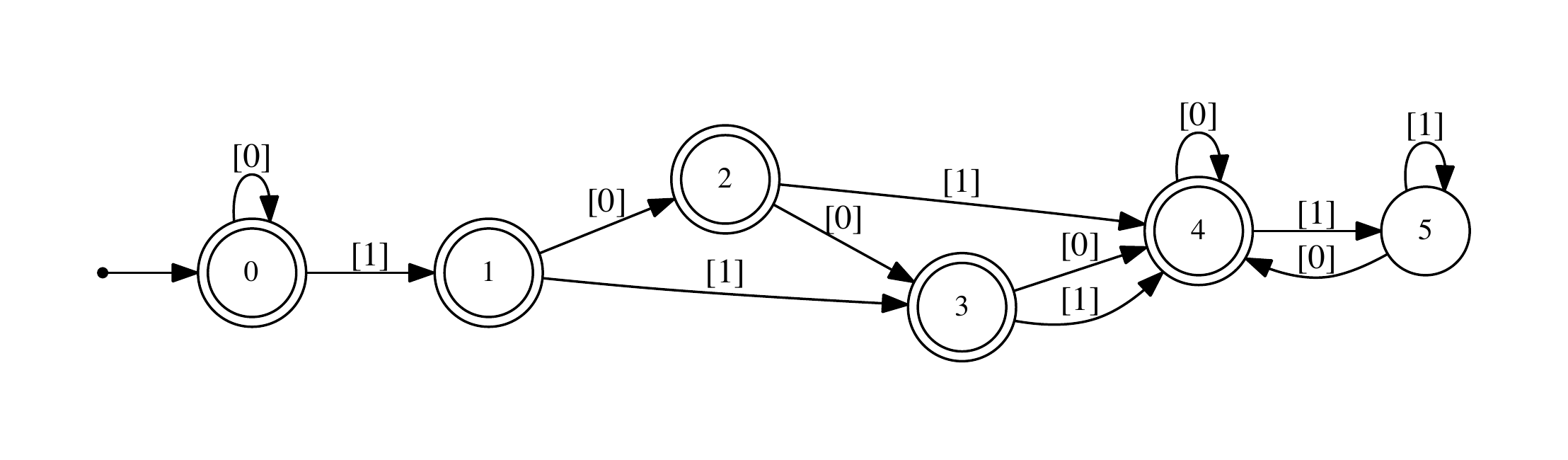}
\caption{Automaton accepting lengths of palindromes in $(i_n)_{n \geq
    0}$}\label{pal_lengths}
\end{figure}

\section{Conclusion}

It would be interesting to study the properties of other sequences of
the form $(\,(-1)^{\sub_{2;w}(n)}\,)_{n \geq 0}$ for different choices
of subsequence $w$.

\section*{Acknowledgments}
The computations needed to prove the results in
Section~\ref{combinat_props} were performed by Jeffrey Shallit and
Hamoon Mousavi.  We would like to sincerely thank them for their
assistance.

\newpage

\end{document}